\documentclass[11pt]{amsart}
\usepackage{amsmath,amssymb,amsthm}
\usepackage{mathrsfs}
\usepackage{enumerate}
\usepackage{comment}

\newtheorem{introthm}{Theorem}

\newtheorem{theorem}{Theorem}[section]
\newtheorem{proposition}[theorem]{Proposition}
\newtheorem{lemma}[theorem]{Lemma}
\newtheorem{corollary}[theorem]{Corollary}
\theoremstyle{definition}

\theoremstyle{remark}
\newtheorem{remark}[theorem]{Remark}

\newcommand{\R}{\mathbb{R}}
\newcommand{\E}{\mathbb{E}}

\newcommand{\N}{\mathbb{N}}
\newcommand{\Prob}{\mathscr{P}}
\DeclareMathOperator{\supp}{supp}
\DeclareMathOperator{\Opt}{Opt}
\DeclareMathOperator{\Id}{Id}

\DeclareMathOperator{\bary}{bar}
\newcommand{\DirB}{\mathbf{Dir}}
\newcommand{\TanB}{\mathbf{Tan}}

\begin{document}

\title{Rigidity of Wasserstein spaces over Riemannian manifolds}

\author{David Lenze}

\address
  {Karlsruher Institut f\"ur Technologie\\ Fakult\"at f\"ur Mathematik \\
Englerstr. 2 \\
76131 Karlsruhe,
Germany}
\email{david.lenze@kit.edu}

\begin{abstract} We show that $L^2$ Wasserstein spaces over Riemannian manifolds are isometrically rigid if and only if their underlying Riemannian manifolds do not admit a Euclidean de Rham factor. We further show that, unless the manifold is isometric to the real line, every isometry of the Wasserstein space is shape-preserving in the sense of Kloeckner. 
Finally, we demonstrate that two such Wasserstein spaces are isometric if and only if their underlying Riemannian manifolds are isometric.

\end{abstract}
\maketitle

\renewcommand{\theequation}{\arabic{section}.\arabic{equation}}
\pagenumbering{arabic}

\section{Introduction}

Let $(M,g)$ be a smooth, connected, and complete Riemannian manifold with distance function $d$ induced by $g$.

We denote by $ \mathscr P_2(M)$ the space of Borel probability measures $\mu$ on $M$ with finite second moment, i.e.\
\begin{equation*}
\int_M d^2(x, x_0) \, d\mu(x) < \infty \quad \text{for some (and hence all) } x_0 \in M.
\end{equation*}

The \emph{Wasserstein distance} $W_2$ on $ \mathscr P_2(M)$ is defined by
\begin{equation}
W_2(\mu, \nu) = \left( \inf_{\pi \in \Pi(\mu, \nu)} \int_{M \times M} d^2(x, y) \, d\pi(x, y) \right)^{1/2},
\end{equation}
where $\Pi(\mu, \nu)$ denotes the set of \emph{transport plans} between $\mu$ and $\nu$, i.e.\ Borel probability measures on $M \times M$ satisfying $(p_1)_\# \pi = \mu$ and $(p_2)_\# \pi = \nu$ for the natural projections $p_1$ and $p_2$.
The Wasserstein distance $W_2$ induces a metric on $\mathscr P_2(M)$, and $(\mathscr P_2(M),W_2)$ is geodesic and Polish.

For an isometry $\gamma:M \to M$, the push-forward $\gamma_\#: \mathscr P_2(M)\to \mathscr P_2(M)$ is an isometry with respect to the Wasserstein metric $W_2$ (cf.~\cite{MR2459454}). This naturally raises the question of whether all isometries arise in this way---that is, whether the following embedding is surjective:
\begin{equation}\label{eq:embedding}
\operatorname{Isom}(M) \hookrightarrow \operatorname{Isom}(\mathscr P_2(M)), \quad \gamma \mapsto \gamma_\#.
\end{equation}

For the Euclidean case $M=\mathbb{E}^m$, the answer is \emph{negative} as demonstrated by Kloeckner~\cite{MR2731158}: for $m\geq 2$, every isometry $\Phi$ of $\mathscr P_2(M)$ is \emph{shape-preserving}: for each $\mu$ there is a $\mu$-dependent $\gamma_\mu \in \operatorname{Isom}(M)$ with $\Phi(\mu)=(\gamma_\mu)_\#\mu$. Interestingly, the one-dimensional case $m=1$ is even more striking: the Wasserstein space admits non-shape-preserving, \emph{exotic} isometries that shift mass distributions in a way that has no geometric counterpart on the base space.

Wasserstein spaces over various Riemannian manifolds have since been studied. In contrast to the Euclidean setting, these are generally \emph{isometrically rigid}---meaning \eqref{eq:embedding} is an isomorphism. Indeed, rigidity has been established for manifolds of strictly negative sectional curvature \cite{MR3509929}, strictly positive sectional curvature \cite{MR4469075}, and specialized cases like tori and spheres \cite{MR4516798}. Conversely, extending Kloeckner's results, a recent study \cite{MR5039553} showed that if the base manifold splits off a Euclidean factor, the corresponding Wasserstein space is \emph{never isometrically rigid}.

Recall that by the general non-simply connected form of de Rham's decomposition theorem
\cite{MR1473665}, any complete connected Riemannian manifold decomposes uniquely as
a direct product $M_0 \times M_1 \times \dots \times M_p$, where $M_0$ is the
maximal Euclidean factor and the remaining factors are indecomposable.

In this paper, we completely resolve the question of isometric rigidity for
Wasserstein spaces over Riemannian manifolds:

\begin{introthm}\label{main}
Let $M$ be a complete connected Riemannian manifold. Then $\operatorname{Isom}(\mathscr P_2(M)) \cong \operatorname{Isom}(M)$ if and only if $M$ does not split a Euclidean factor.
\end{introthm}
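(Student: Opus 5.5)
The "only if" direction is already available: by \cite{MR5039553}, if $M$ splits off a Euclidean factor then $\mathscr P_2(M)$ admits isometries that are not push-forwards, so \eqref{eq:embedding} is not surjective. The work is the converse. Assume $M = M_1\times\dots\times M_p$ has no Euclidean de Rham factor, and fix an isometry $\Phi$ of $\mathscr P_2(M)$. I would show that $\Phi$ maps Dirac masses to Dirac masses. The restriction of $\Phi$ to $\{\delta_x\}\cong M$ is then an isometry $\gamma$ of $M$. Replacing $\Phi$ by $(\gamma^{-1})_\#\circ\Phi$, I may assume $\Phi$ fixes every Dirac mass, and the goal becomes $\Phi=\Id$.

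\textbf{Step 1: Diracs are metrically distinguished.} I would describe Dirac masses intrinsically, for instance through the geometry of the tangent cone $\TanB_\mu$ (or the space of directions $\DirB_\mu$) of $\mathscr P_2(M)$, which Ohta and Gigli identify with a space of (generalized) vector fields in $L^2(\mu)$. At $\delta_x$ this cone is $\mathscr P_2(T_xM)$, a cone over a space of directions with a specific structure. At a non-Dirac $\mu$ the cone contains, for example, a larger family of maximal Euclidean subcones, or has a different dimension of its ``translation'' part. In the flat case $\E^m$ this kind of invariant is exactly where Kloeckner's exotic isometries (for $m=1$) and the shape-preserving ones arise, so the curvature-free directions are precisely what lets mass be reshaped. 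The hypothesis must therefore enter here. Using the holonomy description of de Rham factors, I would show that for $\mu$ not a Dirac there is no isometric embedding of the model cone $\TanB_{\delta_x}$ as a tangent cone at $\mu$. Concretely, the parallel vector fields along $\supp\mu$ that would make $\TanB_\mu$ look like $\TanB_{\delta}$ exist only along a flat factor.

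\textbf{Step 2: an isometry fixing Diracs is the identity.} For $\Phi$ fixing all $\delta_x$, we have $W_2(\Phi\mu,\delta_x)=W_2(\mu,\delta_x)$ for every $x$. Hence $\int d^2(x,\cdot)\,d\mu=\int d^2(x,\cdot)\,d\Phi\mu$ for all $x\in M$. I would then show that the functions $d^2(x,\cdot)$ separate measures when $M$ has no Euclidean factor. One route is to take further differences and distances to measures supported on two or finitely many points; these are also fixed by an induction on support size, using uniqueness of geodesics between close Diracs and finite-support measures. Another route is a density argument: finitely supported measures are fixed, so by continuity $\Phi=\Id$. The induction would use that a measure with $n$ atoms is determined by its distances to measures with fewer atoms together with Diracs, which again needs non-flatness. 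In $\R$ the quantities $\int d^2(x,\cdot)\,d\mu$ only see the mean and variance, and non-flatness breaks this degeneracy.

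\textbf{Main obstacle.} The hardest step is Step 1, proving that Diracs are preserved in complete generality: without curvature sign assumptions, with mixed indecomposable factors, and with possibly non-simply connected $M$. The earlier rigidity results exploited strict curvature signs, and here only the absence of a flat factor is available. My first attempt would be an infinitesimal argument at a generic non-Dirac $\mu$ with two atoms. I would compare the Euclidean ranks of tangent cones, showing that rank equals $\dim M$ only at Diracs unless parallel transport between the atoms is trivial on some subspace, which forces a Euclidean factor via de Rham's theorem. I would then extend to all $\mu$ by density and continuity of the invariant.
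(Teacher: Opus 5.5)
Your proposal has a genuine gap, and it sits in Step 2, which is exactly where the no-Euclidean-factor hypothesis has to enter. You claim that the functions $d^2(x,\cdot)$ separate measures when $M$ has no Euclidean factor. This fails for every nontrivial product $M=M_1\times M_2$, for example $S^2\times S^2$ or a rectangular flat torus, neither of which has a Euclidean factor. Since $d^2=d_1^2+d_2^2$, one has
\[
W_2^2(\mu,\delta_{(p,q)})=\int d_1^2(p,\cdot)\,d\mu_1+\int d_2^2(q,\cdot)\,d\mu_2 ,
\]
which depends only on the marginals $\mu_1,\mu_2$. So $\tfrac12(\delta_{(a,c)}+\delta_{(b,d)})$ and $\tfrac12(\delta_{(a,d)}+\delta_{(b,c)})$ are equidistant from every Dirac mass, and the base case of your induction (a two-atom measure is determined by its distances to Diracs) is false. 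It also cannot be repaired inside two-point measures. The swap $\tfrac12(\delta_{(x_1,x_2)}+\delta_{(y_1,y_2)})\mapsto\tfrac12(\delta_{(x_1,y_2)}+\delta_{(y_1,x_2)})$ is a $W_2$-isometry of $\Delta_2$ fixing every Dirac mass; this is the remark after the proof of the main theorem. Separately, you never show that weights are preserved. Knowing that $\Phi$ fixes Diracs and preserves the number of atoms is not enough: Kloeckner's exotic isometries of $\mathscr P_2(\mathbb E)$ preserve $\tilde\Delta_n$ but not $\Delta_n$.

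The paper supplies the missing ideas in four steps.
\begin{enumerate}[(a)]
\item It shows $\Phi(\Delta_n)=\Delta_n$ for $M\not\cong\mathbb E$, characterizing balanced weights by weak convexity of $\mathrm{C}_n(\mu)$ relative to $\tilde\Delta_n$; the circle is handled by citation.
\item It identifies $\Delta_n$ isometrically with $M^n/S_n$ and lifts $\Phi|_{\Delta_n}$, via Lange's result, to an isometry of $M^n$. This is normalized to an isometry $H_n$ fixing the diagonal.
\item Uniqueness of the de Rham decomposition of $M^n$ forces $H_n$ to permute the $n$ copies of each indecomposable factor $M_r$ by some $\beta_r\in S_n$.
\item The descent condition $H_nS_nH_n^{-1}\subset S_n$ gives $\beta_s\beta_r^{-1}\in Z(S_n)=\{e\}$, and this needs $n\ge3$. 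This is precisely what kills the product swap, while the absence of a Euclidean factor is what excludes the barycentre rotations.
\end{enumerate}

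Your Step 1 also places the hypothesis in the wrong spot. For every $M$, including $\mathbb E^m$, every isometry preserves $|\supp\mu|$. The reason is that the inner tangent cone at $\mu$ (directions of locally two-sided geodesics) is isometric to $\mathrm{Tan}_\mu\mathscr P_2(M)$, a Hilbert space of dimension $\dim M\cdot|\supp\mu|$. Your holonomy mechanism could not detect Euclidean factors in any case: a flat torus has trivial holonomy and no Euclidean factor, yet its Wasserstein space is rigid.
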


Theorem~\ref{main} settles \emph{when} \eqref{eq:embedding} is surjective, but
leaves open the structure of $\operatorname{Isom}(\mathscr P_2(M))$ when $M$ does
split a Euclidean factor. Our second main result addresses this: away
from the single exceptional base $M\cong \mathbb E$ the additional isometries are \emph{never exotic}, but merely rotate each measure
about the barycentre of its Euclidean marginal.

\begin{introthm}\label{shape}
Let $M$ be a complete connected Riemannian manifold with $M\not\cong\mathbb E$,
and let $\mathbb E^k$ be its maximal Euclidean de Rham factor. Then every isometry
of $\mathscr P_2(M)$ is shape-preserving; more precisely,
\[
   \operatorname{Isom}\big(\mathscr P_2(M)\big)\ \cong\
   \operatorname{Isom}(M)\ltimes O(k),
\]
where $O(k)$ acts by rotating each measure about the barycentre of its
$\mathbb E^k$-marginal. In particular, $\mathscr P_2(M)$ admits non-shape-preserving
(exotic) isometries if and only if $M\cong\mathbb E$.
\end{introthm}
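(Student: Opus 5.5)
The plan is to reduce Theorem~\ref{shape} to three ingredients: Kloeckner's Euclidean analysis, Theorem~\ref{main} applied to the non-Euclidean part, and a splitting argument for isometries of $\mathscr P_2(\mathbb E^k\times N)$, where $M=\mathbb E^k\times N$ and $N=M_1\times\dots\times M_p$ has no Euclidean factor.

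\textbf{Step 1: Dirac masses are preserved.} First I show that every $\Phi\in\operatorname{Isom}(\mathscr P_2(M))$ maps Dirac masses to Dirac masses. I would characterise $\delta_x$ metrically, for instance as the measures through which every geodesic can be extended beyond its endpoints in both directions. Equivalently, I would use the tangent cone (the $\TanB$/$\DirB$ structure) at $\mu$: it is a Hilbert space exactly at Diracs, and a cone over a non-Hilbertian space otherwise.

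\textbf{Step 2: Reduce to the identity on Diracs.} Once Diracs are preserved, $\Phi$ induces an isometry $\gamma$ of $M$. Composing with $(\gamma^{-1})_\#$, I may assume $\Phi$ fixes every Dirac mass. Then $W_2(\Phi\mu,\delta_x)=W_2(\mu,\delta_x)$ for all $x$. This says that $\Phi$ preserves the function
\[
x\mapsto\int d^2(x,y)\,d\mu(y)=\int |x_E-y_E|^2\,d\mu+\int d_N^2(x_N,y_N)\,d\mu .
\]

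\textbf{Step 3: Separate the two factors.} The Euclidean part of this function equals $|x_E-b|^2+\operatorname{Var}$, where $b$ is the barycentre of the $\mathbb E^k$-marginal. Varying $x_E$ and $x_N$ separately shows that $b$, the Euclidean variance, and the $N$-marginal potential $\int d_N^2(x_N,\cdot)\,d\mu_N$ are all preserved. Next I would show that $\Phi$ commutes with the translation action of $\mathbb E^k$ (each translation is realised as $\tau_\#$, and $\Phi\circ\tau_\#\circ\Phi^{-1}$ fixes Diracs). I would also show that $\Phi$ maps product measures $\alpha\otimes\beta$ to product measures, by identifying these through the splitting of geodesics and distances $W_2^2(\alpha\otimes\beta,\alpha'\otimes\beta')=W_2^2(\alpha,\alpha')+W_2^2(\beta,\beta')$. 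Restricting to $\delta_e\otimes\mathscr P_2(N)$, Theorem~\ref{main} forces the $N$-part to be the identity. Restricting to $\mathscr P_2(\mathbb E^k)\otimes\delta_n$, Kloeckner's theorem gives a shape-preserving isometry for $k\ge2$, that is, a measure-dependent rotation about the barycentre. A continuity and rigidity argument should show this rotation is a single global element of $O(k)$, after composing with the obvious $O(k)$ action.

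\textbf{Step 4 (main obstacle): pass to non-product measures and handle $k=1$.} For a general $\mu$ I would use disintegration over the $N$-factor together with the triangle-equality/geodesic structure. The aim is to show that $\Phi\mu$ has the same $N$-marginal and that its conditional measures are rotated rigidly by the same $A\in O(k)$ about $b$. When $k=1$ and $N$ is nontrivial, the exotic isometries of $\mathscr P_2(\mathbb E)$ must be excluded. The idea is that such an isometry would distort distances from $\delta_e\otimes\beta$ with $\beta$ non-Dirac to measures mixing the factors. To rule this out, I would test against two-point measures $\tfrac12(\delta_{(s,n)}+\delta_{(t,n')})$ with $n\ne n'$: their distances to the fixed product measures determine the pair $\{s,t\}$ up to reflection about the midpoint. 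This is where I expect the most work. Finally, the semidirect-product structure follows because $\gamma_\#$ conjugates the rotation action, with $O(k)$ normalised by the Euclidean-factor part of $\operatorname{Isom}(M)$. The "if and only if" for exotic isometries follows from this description together with Kloeckner's exotic examples on $\mathbb E$.
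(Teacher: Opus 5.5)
Your proposal has a genuine gap at its core, in Steps~3--4, which you yourself flag as the main obstacle.

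First, you give no working mechanism for showing that $\Phi$ maps product measures $\alpha\otimes\beta$ to product measures. The identity $W_2^2(\alpha\otimes\beta,\alpha'\otimes\beta')=W_2^2(\alpha,\alpha')+W_2^2(\beta,\beta')$ only says that product measures form an isometric copy of $\mathscr P_2(\mathbb E^k)\times\mathscr P_2(N)$. Its $\Phi$-image is again such a copy, and nothing singles out the product measures metrically. You would also need $\Phi$ to preserve the slices $\{\delta_e\}\otimes\mathscr P_2(N)$ and $\mathscr P_2(\mathbb E^k)\otimes\{\delta_n\}$ before Theorem~\ref{main} or Kloeckner's theorem can be invoked.

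Second, and more seriously, even granting all this, product measures form a closed set with empty interior in $\mathscr P_2(M)$. ``Disintegration plus the triangle-equality structure'' is not an argument that determines $\Phi$ on a general $\mu$.

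For $k=1$ the difficulty is structural. On the slice $\mathscr P_2(\mathbb E)\otimes\{\delta_n\}$, the restriction of $\Phi$ could a priori be one of Kloeckner's exotic isometries, and nothing inside the slice excludes that. The obstruction can only come from measures that mix the two factors, which are exactly the ones your plan does not control.

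The missing idea is to work with a dense family that $\Phi$ provably preserves and on which it becomes finite-dimensional: the balanced empirical measures $\Delta_n$. The paper's route is:
\begin{enumerate}[(i)]
\item Theorem~\ref{balanced} gives $\Phi(\Delta_n)=\Delta_n$ via weak convexity of $\mathrm C_n(\mu)$ inside $\tilde\Delta_n$ (Lemma~\ref{convex}). This needs only $\dim M\ge 2$, so it is precisely what rules out exotic behaviour when $k=1$ and $N$ is non-trivial.
\item Since $\Delta_n\cong M^n/S_n$, the induced orbifold isometry lifts to $M^n$ (Lemma~\ref{orbilift}).
\item De~Rham on $M^n=N^n\times\mathbb E^{kn}$ splits the lift: the $N^n$-part is a block permutation, and the Euclidean part is $P_\tau\Theta_B$ by Lemma~\ref{lem:normalizer}.
\item $Z(S_n)=1$ matches the two permutations, and density of $\bigcup_n\Delta_n$ finishes the proof.
\end{enumerate}

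Several preliminary steps are also wrong as stated.

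In Step~1, neither characterisation of Dirac masses holds. The full tangent cone at $\delta_x$ is $\mathscr P_2(T_xM)$, which is not a Hilbert space. By Proposition~\ref{InnT}, the \emph{inner} tangent cone is a Hilbert space at every $\mu$, so Hilbertness cannot distinguish Diracs either way. Moreover, a geodesic from $\delta_x$ to a non-Dirac measure can never be extended backwards past $\delta_x$: by Lemma~\ref{lem:georeg}, an interior Dirac point forces the whole geodesic to consist of Dirac masses. What does work is the count in Corollary~\ref{dim}: the Euclidean rank of $\mathrm{InnC}_\mu$ is $\dim M\cdot|\supp\mu|$, which is minimal exactly at Dirac masses.

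In Step~3, $\Phi\circ\tau_\#\circ\Phi^{-1}$ acts on Dirac masses as $\tau$, not as the identity. Even knowing that the commutator $(\tau^{-1})_\#\circ\Phi\circ\tau_\#\circ\Phi^{-1}$ fixes all Dirac masses gives nothing, since the $\Psi_R$ are non-trivial isometries fixing all Dirac masses. Commutation with translations would need something like the splitting $\mathscr P_2(M)\cong\mathbb E^k\times\{\mu:\bary(\mu)=0\}$ together with a splitting theorem for its isometries.

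Finally, $W_2(\Phi\mu,\delta_x)=W_2(\mu,\delta_x)$ yields the barycentre, but otherwise only the function $x_N\mapsto\operatorname{Var}_{\mathbb E^k}(\mu)+\int d_N^2(x_N,\cdot)\,d\mu_N$. The variance enters as an additive constant, so separating it from the $N$-potential needs an injectivity argument you have not supplied.
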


For $k=0$, Theorem~\ref{shape} reduces to Theorem~\ref{main},
and for $M=\mathbb E^m$, $m\geq 2$ it recovers Kloeckner's
$\operatorname{Isom}(\mathscr P_2(\mathbb E^m))\cong\operatorname{Isom}(\mathbb
E^m)\ltimes O(m)$. Its force lies in the mixed case $M=N\times
\mathbb E^k$ with $N$ non-trivial and free of Euclidean factors, where \cite{MR5039553} established non-rigidity but left open whether every isometry is nevertheless shape-preserving. Most compellingly, for $k=1$, as soon as \emph{another} indecomposable factor is present, the exotic isometries of the line completely disappear, and only 
barycentre rotations survive.

Finally, we derive the following rigidity statement:

\begin{introthm}\label{B}
   Let $M$ and $N$ be complete connected Riemannian manifolds. The $L^2$-Wasserstein spaces $(\mathscr P_2(M),W_2)$ and $(\mathscr P_2(N),W_2)$ are isometric if and only if $M$ is isometric to $N$.
\end{introthm}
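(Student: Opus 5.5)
The idea is to recover $M$ (up to isometry) from intrinsic metric data of $(\mathscr P_2(M),W_2)$. The direction ``$M\cong N\Rightarrow \mathscr P_2(M)\cong\mathscr P_2(N)$'' is immediate: if $\gamma:M\to N$ is an isometry, then $\gamma_\#$ is an isometry of the Wasserstein spaces. For the converse, let $\Phi:\mathscr P_2(M)\to\mathscr P_2(N)$ be an isometry. I would first show that $\Phi$ maps Dirac masses to Dirac masses. Once that holds, $\Phi$ restricts to a map $\delta(M)\to\delta(N)$ that preserves distance, since $W_2(\delta_x,\delta_y)=d(x,y)$. Applying the same argument to $\Phi^{-1}$ makes it onto, so $M$ and $N$ are isometric.

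The key step is a metric characterization of Dirac masses in $\mathscr P_2(M)$ that makes no reference to the base. I would use the one behind the rigidity results just cited: the structure of geodesics, equivalently of tangent cones. A Dirac mass $\delta_x$ is characterised by the following property. Every geodesic starting at $\delta_x$ extends indefinitely in the appropriate sense, namely geodesics $\delta_x\to\mu$ are dilations of the optimal map (at least when $M$ is nonpositively curved), and the tangent cone $\TanB_{\delta_x}$ is isometric to $\mathscr P_2(T_xM)$, which is a Euclidean cone of a special rigid type. By contrast, for a non-Dirac $\mu$ there exist geodesics through $\mu$ in its ``interior''; for instance, $\mu$ lies in the middle of a geodesic between two other measures whenever it is not a Dirac mass, since one can spread mass along local geodesics. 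So I would aim to prove the following.
\begin{quote}
$\mu$ is a Dirac mass iff $\mu$ is not an interior point of any nonconstant geodesic segment of $(\mathscr P_2(M),W_2)$.
\end{quote}
Since this is invariant under isometries, it gives the claim.

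The ``not interior'' half is where I expect the main obstacle. One must show that $\delta_x$ is never the midpoint of a geodesic between $\mu_0$ and $\mu_1$. If it were, both ends would be joined to $\delta_x$, and the optimal plan between $\mu_0$ and $\mu_1$ would pass all mass through $x$ at time $1/2$. Then $W_2^2(\mu_0,\mu_1)=4W_2^2(\mu_0,\delta_x)=4W_2^2(\mu_1,\delta_x)$. Using $d(y,z)^2\le 2d(y,x)^2+2d(x,z)^2$, with equality forcing $y$ and $z$ to lie on a minimizing geodesic with $x$ as its midpoint and $d(y,x)=d(x,z)$, this is delicate. It is possible on $\mathbb R$, for example with $\mu_0=\tfrac12(\delta_{-1}+\delta_1)$, so the naive characterization fails precisely in the presence of Euclidean-type directions. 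I would therefore not rely on it. Instead I would fall back on Theorem~\ref{shape} and the decomposition machinery: write $M=M'\times\mathbb E^k$ and $N=N'\times\mathbb E^l$. I would then use the result, presumably established in the body of the paper for Theorems~\ref{main} and~\ref{shape}, that any isometry between Wasserstein spaces maps Dirac masses to images of Dirac masses up to the barycentre rotation. Concretely, I would try to show that the subset $\Delta=\{\mu:\ \text{every isometry orbit through }\mu\ \text{has minimal ``rank''}\}$ is metrically defined and equals $\delta(M)$ when $M\not\cong\mathbb E$. A simpler route is to note that $\Phi$ conjugates $\operatorname{Isom}(\mathscr P_2(M))$ to $\operatorname{Isom}(\mathscr P_2(N))$. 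Fixed-point sets of the $O(k)$-part, namely the measures fixed by all barycentre rotations, are exactly the measures whose Euclidean marginal is a Dirac mass, and these are intrinsically identified. Repeating the argument on the remaining factor with Theorem~\ref{main} isolates the Diracs.

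The case $M\cong\mathbb E$ must be handled separately. Here $\mathscr P_2(\mathbb E)$ is isometric to a closed convex cone in $L^2(0,1)$ via quantile functions, so it is flat and infinite-dimensional. I would argue that $N$ must then be $\mathbb E$ as well, by comparing with the non-flatness or the finite-dimensional cone structure of $\mathscr P_2(N)$ for any other $N$. For example, $\mathscr P_2(N)$ contains the $k$-dimensional Euclidean-type tangent structure at Diracs only when $N$ is one-dimensional, and a one-dimensional complete $N$ is either $\mathbb E$ or a circle, and the circle gives a bounded space.
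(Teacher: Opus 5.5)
Your outer reduction matches the paper's: show that $\Phi$ maps Dirac masses onto Dirac masses, then restrict to $\Delta_1$. But the step that carries all the weight is missing: a metric characterization of Dirac masses that works for an isometry between Wasserstein spaces over \emph{different} bases.

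The characterization you first propose is false in every manifold of positive dimension, not only when Euclidean directions are present. For any short minimizing geodesic $\gamma$ with $\gamma(0)=x$, the curve $t\mapsto\delta_{\gamma(t)}$ is a nonconstant geodesic of $\mathscr P_2(M)$ with $\delta_x$ as an interior point. (Your example on $\mathbb R$ does not work either, since the anti-monotone coupling of $\tfrac12(\delta_{-1}+\delta_1)$ with itself is not optimal.) In fact, by Proposition~\ref{geodesics} \emph{every} $\mu$ is interior to the nonconstant geodesic $t\mapsto\exp(t\nabla\varphi)_\#\mu$. So the mere existence of two-sided geodesics distinguishes nothing.

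The fallback does not repair this.
\begin{itemize}
\item Theorems~\ref{main} and~\ref{shape} describe self-isometries of one fixed space $\mathscr P_2(M)$. They say nothing about $\Phi:\mathscr P_2(M)\to\mathscr P_2(N)$ with $N$ unknown, and invoking them presupposes $M\cong N$. Moreover, in the paper their proofs rest on Theorem~\ref{wp}, which already yields the statement directly.
\item In the group-theoretic route, nothing identifies $\Phi\circ\Psi_R\circ\Phi^{-1}$ with barycentre rotations of $N$.
\item The common fixed set of the $\Psi_R$ is strictly larger than the set of measures with Dirac Euclidean marginal. For example, take $\delta_p\otimes\sigma$ with $\sigma$ the uniform measure on a sphere about $0\in\E^k$; for $k=1$ this is $\tfrac12(\delta_{(p,-1)}+\delta_{(p,1)})$.
\item For $k=0$ with $\operatorname{Isom}(M)$ trivial, the group carries no information at all.
\item The case $M\cong\E$ is only sketched.
\end{itemize}

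The missing idea is to \emph{count} independent two-sided directions at $\mu$, rather than ask whether one exists. Your remark about tangent cones points this way, but you never turn it into an invariant. The paper uses the inner tangent cone $\mathrm{InnC}_\mu$: the closure, in the tangent cone, of the directions of geodesics that extend backwards through $\mu$. It shows that $\mathrm{InnC}_\mu\cong\mathrm{Tan}_\mu\mathscr P_2(M)$ (Proposition~\ref{InnT}), using four ingredients:
\begin{itemize}
\item Lemma~\ref{lem:grad};
\item Proposition~\ref{geodesics};
\item Gigli's identification of $\mathrm{Tan}_\mu$;
\item the fact that two-sided geodesics through $\mu$ are induced by maps.
\end{itemize}
This is a Hilbert space of dimension $\dim M\cdot|\supp\mu|$ (Corollary~\ref{dim}).

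An isometry induces isometries of inner tangent cones, so $\dim M\cdot|\supp\mu|=\dim N\cdot|\supp\Phi(\mu)|$ for all $\mu$. Taking infima gives $\dim M=\dim N$, and then $|\supp\mu|=|\supp\Phi(\mu)|$ (Theorem~\ref{wp}). Hence $\Phi$ maps Dirac masses onto Dirac masses and your final step applies. No separate treatment of $\E$ or of Euclidean factors is needed.
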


This should be compared to \cite[Theorem 1.2]{MR3509929}, where an analogous statement was shown for locally compact geodesic metric spaces that are \emph{geodesically complete}, here meaning that each minimizing geodesic segment can be extended to a minimizing line. In the Riemannian setting this already fails for any compact space but also for non-compact ones like two-dimensional paraboloids of revolution, all of which Theorem~\ref{B} still covers.

Isometric rigidity has also been widely studied beyond smooth Riemannian manifolds: for singular spaces of negative curvature \cite{MR3509929}, for Hilbert spaces~\cite{MR4524213}, countable discrete spaces~\cite{MR4000104} and graphs~\cite{MR4446253}, for sub-Riemannian Carnot groups \cite{MR4997323}, and for a number of other special cases in \cite{MR5039553}. As in the Riemannian case, a Euclidean (or more generally Hilbertian) factor is the typical source of non-rigidity. 

Rigidity for $L^p$-Wasserstein spaces of other exponents $p$ has also been studied: the real line is rigid for $p\neq2$~\cite{MR4127894}, while compact rank one symmetric spaces are rigid for all $p\in(1,\infty)$~\cite{MR4469075}. We refer to the introduction of~\cite{MR5039553} for an overview.

\vspace{0.5em}

\noindent \textbf{Structure of the paper.}
Section~2 recalls tangent cones in geodesic metric spaces, and introduces the
\emph{inner tangent cone} as the subcollection of directions represented by geodesics that extend
backward past their starting point; it also records some necessary definitions and
results from the theory of optimal transport and the geometry of Wasserstein spaces.

Building on Gigli~\cite{MR2847481}, Section~3 shows that the tangent spaces of
Otto's formal Riemannian calculus,
\[\mathrm{Tan}_\mu\mathscr P_2(M):=\overline{\{\nabla\varphi:\varphi\in C_c^\infty(M)\}}^{\,L^2_\mu},\]
are recognizable \emph{purely metrically} as the inner tangent cones of
$(\mathscr P_2(M),W_2)$. As a consequence, the dimension of $\mathrm{Tan}_\mu \mathscr P_2(M)$
is preserved under isometries, and since it encodes the size of the support of the
measure $\mu$, we infer that the set of empirical measures $\sum_{i=1}^n a_i\delta_{x_i}$
is also preserved. The proof of Theorem~\ref{B} (Section~5) is an immediate consequence.

In Section~4, we show that isometries also preserve the \emph{balanced} empirical
measures $\frac{1}{n}\sum_{i=1}^n \delta_{x_i}$, provided $\dim M \geq 2$. The remaining non-Euclidean one-dimensional case, the circle, is covered by the known
rigidity of $\mathscr P_2(S^1)$~\cite{MR4516798}.  Optimal transport between balanced
measures is attained at a permutation, so, up to rescaling, the set of $n$-point
balanced empirical measures is isometric to the \emph{Riemannian orbifold} $M^n/S_n$.

Section~5 builds on Lange~\cite{MR4163391} to lift isometries of $M^n/S_n$ to
isometries of $M^n$. By the uniqueness of the de~Rham
decomposition~\cite{MR1473665}, such a lift permutes the canonical indecomposable
factors of $M^n$; when $M$ has no Euclidean factor and $n \ge 3$, this pins it down to a diagonal
isometry of $M$ together with a block permutation invisible in $M^n/S_n$, so $\Phi$
is a push-forward and Theorem~\ref{main} follows. The presence of a Euclidean factor instead leaves
room for barycentric rotations, and tracking these gives the shape-preservation
Theorem~\ref{shape} (Section~6).

\section{Preliminaries}

Throughout, a distance-preserving map between metric spaces is called
\emph{isometric}, and a surjective isometric map is an \emph{isometry}.

\subsection{Tangent cones}\label{ss:cones}

For references for the following, see \cite[Section 5]{MR2117451} and
\cite{MR1835418}.
Let $(X,d)$ be a geodesic metric space and $p\in X$. Recall that a
\emph{geodesic} in a metric space is an isometric embedding of an interval, up
to a constant scaling factor; by contrast, \emph{geodesics} on a Riemannian
manifold keep their classical meaning of (merely locally minimizing) solutions
of the geodesic equation. For a geodesic $\gamma$ issuing from $p$ we
write $|\gamma'|$ for its speed.

For geodesics $\gamma_1:[0,a_1]\to X$ and $\gamma_2:[0,a_2]\to X$
with $\gamma_1(0)=\gamma_2(0)=p$ and $a_1,a_2>0$, we define
\begin{align}\label{geodesic_distance}
\delta(\gamma_1,\gamma_2):=\limsup_{t\to0}\,
\frac{d(\gamma_1(t),\gamma_2(t))}{t}.
\end{align}
Since the limsup is subadditive along triples of geodesics, $\delta$ is a
pseudo metric. The \emph{space of directions} $\mathrm{D}_pX$ at $p$ is the
metric space induced by this pseudo metric on the set of constant speed
geodesics issuing from $p$. The space $(\mathrm{D}_pX,\delta)$ may be
incomplete; the \emph{tangent cone} $\mathrm{C}_pX$ at $p$ arises as its
metric completion. We define the space of \emph{inner directions}
$\mathrm{InnD}_pX\subset\mathrm{D}_pX$ as the subspace consisting of directions represented
by geodesics $\gamma:[0,a]\to X$ which admit a \emph{local} extension to a geodesic
$\tilde\gamma:(-\epsilon,\epsilon)\to X$, $\epsilon>0$, i.e.\ such that
$\tilde\gamma|_{[0,\epsilon)}=\gamma|_{[0,\epsilon)}$. The \emph{inner tangent cone}
$\mathrm{InnC}_pX$ is the closure of $\mathrm{InnD}_pX$ in $\mathrm{C}_pX$;
equivalently, it is the metric completion of $\mathrm{InnD}_pX$,
isometrically embedded in $\mathrm{C}_pX$.

For non-constant $\gamma_1,\gamma_2$ as above and $t\in(0,a_1]$,
$s\in(0,a_2]$, the \emph{comparison angle}
$\tilde\theta(t,s)\in[0,\pi]$ is defined by
\[
\cos\big(\tilde\theta(t,s)\big)
=\frac{|\gamma_1'|^2t^2+|\gamma_2'|^2s^2-d^2(\gamma_1(t),\gamma_2(s))}
{2\,|\gamma_1'||\gamma_2'|\,ts},
\]
i.e.\ as the angle at the vertex corresponding to $p$ of a Euclidean triangle
with side lengths $|\gamma_1'|t$, $|\gamma_2'|s$, $d(\gamma_1(t),\gamma_2(s))$.
The \emph{upper} or \emph{Alexandrov angle} is
\[
\angle(\gamma_1,\gamma_2):=\limsup_{t,s\to0}\tilde\theta(t,s)\in[0,\pi].
\]
If the joint limit exists, the angle is said to exist \emph{in the strict
sense}. In that case the limit in \eqref{geodesic_distance} also exists and
\[
\delta(\gamma_1,\gamma_2)
=\Big(|\gamma_1'|^2+|\gamma_2'|^2
-2|\gamma_1'||\gamma_2'|\cos(\angle(\gamma_1,\gamma_2))\Big)^{\frac12}.
\]
A geodesic metric space $X$ is said to \emph{have angles} if the Alexandrov
angle between any pair of geodesic segments issuing from the same point
exists in the strict sense.

\subsection{Optimal transport and Wasserstein geometry}\label{ss:ot}

For a Borel map $f:X\to Y$ and $\mu\in\Prob(X)$, the push-forward
$f_\#\mu\in\Prob(Y)$ is defined by $f_\#\mu(E):=\mu(f^{-1}(E))$. Recall that the set of transport plans or \emph{couplings} between $\mu,\nu\in\Prob_2(M)$ is
\[
\Pi(\mu,\nu):=\big\{\pi\in\Prob(M\times M):\ \pi^1_\#\pi=\mu,\
\pi^2_\#\pi=\nu\big\},
\]
with $\pi^1,\pi^2$ the coordinate projections, so that
\begin{equation}\label{W}
	W_2^2(\mu,\nu)=\inf_{\pi\in\Pi(\mu,\nu)}\int_{M\times M}d^2(x,y)\,d\pi(x,y).
\end{equation}
The integral $\int_{M\times M}d^2(x,y)\,d\pi(x,y)$ is the \emph{cost} of the plan $\pi$. The plans for which the infimum in \eqref{W} is achieved are \emph{optimal}, and
$\Opt(\mu,\nu)\subset\Pi(\mu,\nu)$ denotes the set of these optimal plans; in this setting $\Opt(\mu,\nu)$ is always non-empty (cf. \cite[Theorem 4.1]{MR2459454}). A plan of the form $(\Id,T)_\#\mu$ is said
to be \emph{induced by the map} $T$.

Following convention (cf.\ \cite{MR2459454, MR2847481}) we set
$c(x,y):=\tfrac12\,d^2(x,y),$ and observe that a plan is optimal for the cost $c$ if and only if it is
optimal for $d^2$; all identities below refer to $c$.

We recall the following definitions from \cite{MR2847481}, see also \cite{MR1440931}:
\begin{itemize}
	\item A set $\Gamma\subset M\times M$ is \emph{$c$-cyclically monotone} if for every
$n\in\N$, every $(x_i,y_i)\in\Gamma$, $i=1,\dots,n$, and every permutation
$\sigma\in S_n$ it holds
\[
\sum_{i=1}^{n}c(x_i,y_i)\ \le\ \sum_{i=1}^{n}c(x_i,y_{\sigma(i)}).
\]
\item For $\psi:M\to\R\cup\{-\infty\}$, the \emph{$c^+$-transform}
$\psi^{c+}:M\to\R\cup\{-\infty\}$ is
\[
\psi^{c+}(x):=\inf_{y\in M}\big(c(x,y)-\psi(y)\big),
\]
and clearly
\begin{equation}\label{eq:weakdual}
\psi(x)+\psi^{c+}(y)\le c(x,y),\qquad\forall x,y\in M.
\end{equation}
A function $\varphi:M\to\R\cup\{-\infty\}$, not identically $-\infty$, is
\emph{$c$-concave} if $\varphi=\psi^{c+}$ for some $\psi$.
\item For $\varphi$ $c$-concave, the \emph{$c$-superdifferential} is defined as
\[
\partial^{c+}\varphi:=\big\{(x,y)\in M\times M:\
\varphi(x)+\varphi^{c+}(y)=c(x,y)\big\},
\]
and $\partial^{c+}\varphi(x):=\{y:(x,y)\in\partial^{c+}\varphi\}$.

\end{itemize}

\begin{theorem}[Characterizations of optimality,
{\cite[Theorems 1.2 and 1.6]{MR2847481}}]\label{thm:opt}
Let $\mu,\nu\in\Prob_2(M)$ and $\pi\in\Pi(\mu,\nu)$. The following are
equivalent:
\begin{enumerate}[(i)]
\item $\pi$ is optimal;
\item $\supp(\pi)$ is $c$-cyclically monotone;
\item $\supp(\pi)\subset\partial^{c+}\varphi$ for some $c$-concave function
$\varphi$.
\end{enumerate}
\end{theorem}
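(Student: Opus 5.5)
The plan is the standard Kantorovich-duality route for the quadratic-type cost $c=\tfrac12 d^2$ on $M$. Since $c$ is continuous and $c(x,y)\le d^2(x,x_0)+d^2(y,x_0)$, the second moment assumption makes the cost of every $\pi\in\Pi(\mu,\nu)$ finite. I will organise the proof as the cycle (i)$\Rightarrow$(ii)$\Rightarrow$(iii)$\Rightarrow$(i).

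\emph{(i)$\Rightarrow$(ii).} I argue by contradiction. Suppose there are $(x_i,y_i)\in\supp(\pi)$ and a permutation $\sigma$ with
\[
\sum_i c(x_i,y_i)>\sum_i c(x_i,y_{\sigma(i)}).
\]
By continuity of $c$, the same strict inequality holds for all points in small neighbourhoods $U_i\times V_i$ of $(x_i,y_i)$. Each of these neighbourhoods has positive $\pi$-mass. Set $\pi_i:=\pi|_{U_i\times V_i}$, normalised to a common small mass $\varepsilon$, and form the product measure of the $\pi_i$. I then modify $\pi$ by subtracting $\sum_i(\text{projection of }\pi_i)$ and adding the rearranged couplings that pair the first marginal of $\pi_i$ with the second marginal of $\pi_{\sigma(i)}$. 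The marginals are unchanged, and the cost strictly decreases. This contradicts optimality.

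\emph{(ii)$\Rightarrow$(iii).} This is Rockafellar's construction adapted to $c$. Fix $(x_0,y_0)\in\Gamma:=\supp(\pi)$ and define
\[
\varphi(x):=\inf\Big\{c(x,y_n)-c(x_n,y_n)+c(x_n,y_{n-1})-\dots-c(x_0,y_0)\Big\},
\]
where the infimum runs over all finite chains $(x_k,y_k)\in\Gamma$. Cyclic monotonicity gives $\varphi(x_0)=0$, so $\varphi\not\equiv-\infty$. By construction $\varphi$ is an infimum of functions of the form $c(\cdot,y)-\text{const}$, hence $c$-concave. Moreover, appending a pair $(x,y)\in\Gamma$ to a chain shows that $\varphi(x')\le c(x',y)-c(x,y)+\varphi(x)$ for all $x'$. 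This inequality yields $\varphi(x)+\varphi^{c+}(y)\ge c(x,y)$, which together with \eqref{eq:weakdual} gives $\Gamma\subset\partial^{c+}\varphi$.

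\emph{(iii)$\Rightarrow$(i).} For any $\tilde\pi\in\Pi(\mu,\nu)$, \eqref{eq:weakdual} gives
\[
\int c\,d\tilde\pi\ \ge\ \int\varphi\,d\mu+\int\varphi^{c+}\,d\nu,
\]
and equality holds for $\pi$, since $\varphi\oplus\varphi^{c+}=c$ on $\supp\pi$. Hence $\pi$ is optimal.

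I expect the main obstacle to be in this last step, namely integrability. The functions $\varphi$ and $\varphi^{c+}$ need not be integrable, and the splitting of the integral is only formal. I would handle this as Gigli and Ambrosio do. Since $\varphi(x)+\varphi^{c+}(y)=c(x,y)$ on $\supp\pi$, the function $\varphi$ is finite $\mu$-a.e. and $\varphi^{c+}$ is finite $\nu$-a.e. Also $\varphi\le c(\cdot,y_0)-\varphi^{c+}(y_0)$, so the positive parts are bounded by $c(\cdot,y_0)+C$, which is integrable thanks to the second moments. The remaining integrals are then well defined in $[-\infty,\infty)$. To finish, I would compare $\pi$ and $\tilde\pi$ via the function $c-\varphi\oplus\varphi^{c+}\ge0$, integrating this nonnegative function rather than splitting the sum, and use that both plans have the same marginals through truncation. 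Carrying out this truncation and monotone convergence carefully is the delicate part. Step (i)$\Rightarrow$(ii) also needs a short measure-theoretic check that the modified measure is nonnegative, which holds for $\varepsilon$ small.
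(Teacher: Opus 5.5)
Your proposal is correct and is essentially the standard proof that underlies this result: a cost-lowering perturbation on small neighbourhoods for (i)$\Rightarrow$(ii), Rockafellar's chain construction for (ii)$\Rightarrow$(iii), and integration of the pointwise inequality $\varphi(x)+\varphi^{c+}(y)\le c(x,y)$ for (iii)$\Rightarrow$(i); the paper does not prove the statement but simply quotes it from Gigli. The integrability issue you flag as delicate is in fact immediate and needs no truncation: $\int c\,d\pi<\infty$ by the second moments, $\varphi(x)+\varphi^{c+}(y)=c(x,y)$ for $\pi$-a.e.\ $(x,y)$, and you already have $\varphi^+\in L^1(\mu)$ and $(\varphi^{c+})^+\in L^1(\nu)$, so the negative parts are integrable as well, giving $\varphi\in L^1(\mu)$, $\varphi^{c+}\in L^1(\nu)$ and hence $\int(\varphi\oplus\varphi^{c+})\,d\tilde\pi=\int\varphi\,d\mu+\int\varphi^{c+}\,d\nu$ for every $\tilde\pi\in\Pi(\mu,\nu)$.
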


In particular, by the implication (iii)$\Rightarrow$(i) of
Theorem~\ref{thm:opt}, \emph{any} plan concentrated on the
$c$-superdifferential of a $c$-concave function is optimal between its own
marginals; this will be used repeatedly. A $c$-concave
function $\varphi$ such that $\partial^{c+}\varphi$ contains the support of
every optimal plan from $\mu$ to $\nu$ is called a \emph{Kantorovich
potential} for $(\mu,\nu)$; Kantorovich potentials always exist
\cite[Theorem 5.10]{MR2459454}.

As a special case of \cite[Lemma 2.9]{MR2847481}, for $\varphi\in C_c^\infty(M)$
and $\epsilon>0$ sufficiently small the function $\epsilon\varphi$ is
$c$-concave and
\[
\partial^{c+}(\epsilon\varphi)
=\big\{(x,\exp_x(-\epsilon\nabla\varphi(x))):x\in M\big\}.
\]
By the implication
(iii)$\Rightarrow$(i) of Theorem~\ref{thm:opt}, the plan
$(\Id,\exp(-\epsilon\nabla\varphi))_\#\sigma$ is then optimal for every
$\sigma\in\Prob_2(M)$. An inspection of the proof of
\cite[Lemma 2.9]{MR2847481} shows that the threshold for $\epsilon$ depends
only on the geometry of $M$, a compact set containing $\operatorname{supp}\varphi$, and upper
bounds for $\|\varphi\|_\infty$, $\|\nabla\varphi\|_\infty$ and
$\|\nabla^2\varphi\|_\infty$ (the supremum of the operator norm of the
Hessian), so it may be chosen uniformly over such families. We record this
uniform statement:

\begin{lemma}[Uniform smooth version of {\cite[Lemma 2.9]{MR2847481}}]\label{lem:unif}
Let $K\subset M$ be compact and $C>0$. There exists
$\epsilon_0=\epsilon_0(M,K,C)>0$ with the following property: for every
$\psi\in C_c^\infty(M)$ with
\[
\supp\psi\subset K,\qquad
\max\big\{\|\psi\|_\infty,\|\nabla\psi\|_\infty,\|\nabla^2\psi\|_\infty\big\}
\le C,
\]
and every $\epsilon\in(0,\epsilon_0]$, the function $\epsilon\psi$ is
$c$-concave and
\[
\partial^{c+}(\epsilon\psi)
=\big\{(x,\exp_x(-\epsilon\nabla\psi(x))):x\in M\big\}.
\]
In particular, $(\Id,\exp(-\epsilon\nabla\psi))_\#\sigma$ is an optimal plan
for every $\sigma\in\Prob_2(M)$, where $\exp(-\epsilon\nabla\psi)$ denotes the
map $x\mapsto\exp_x(-\epsilon\nabla\psi(x))$.
\end{lemma}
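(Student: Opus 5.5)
We follow the proof of \cite[Lemma 2.9]{MR2847481} and check that each constant depends only on $M$, $K$ and $C$. Fix a compact neighbourhood $K'$ of $K$, say $K'=\{x: d(x,K)\le 1\}$, which is compact by Hopf--Rinow. Let $r_0>0$ be less than the injectivity radius of $M$ on $K'$, and small enough that on $B_{r_0}(x)$, $x\in K'$, the function $c(x,\cdot)=\tfrac12 d^2(x,\cdot)$ is smooth with Hessian bounded below by $\tfrac12\,\mathrm{Id}$. Both are possible by compactness. Define $T_\epsilon(x):=\exp_x(-\epsilon\nabla\psi(x))$. Since $\psi$ vanishes outside $K$, $T_\epsilon=\Id$ off $K$, and $d(x,T_\epsilon x)\le\epsilon C<r_0$ once $\epsilon_0<r_0/C$.

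\textbf{Step 1.} We aim to show $\epsilon\psi(x)+\phi(y)\le c(x,y)$ for all $x,y$, with equality when $y=T_\epsilon(x)$. Here $\phi:=(\epsilon\psi)^{c+}$ would be the candidate, but it is cleaner to argue via the dual function. Fix $y$ and consider $F_y(x):=c(x,y)-\epsilon\psi(x)$. We want its infimum over $x$ to be attained exactly at the points with $T_\epsilon(x)=y$, and we split into cases.

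If $y\notin K'$, then every $x\in K$ has $d(x,y)\ge1$. Hence $F_y(x)\ge\tfrac12-\epsilon C>0\ge F_y(y)$ for $x\in K$, so the infimum is taken over $x\notin K$. There $F_y=c(\cdot,y)$, which is minimised at $x=y=T_\epsilon(y)$.

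If $y\in K'$, the region $d(x,y)\ge r_0$ is excluded in the same way. Indeed $F_y\ge r_0^2/2-\epsilon C>\epsilon C\ge\inf_{B_{r_0}(y)}F_y$ for $\epsilon$ small, because $F_y(y)=-\epsilon\psi(y)\le\epsilon C$. On the ball $B_{r_0}(y)$, $F_y$ is smooth and strictly convex as soon as $\epsilon\|\nabla^2\psi\|\le\epsilon C<\tfrac12$. It therefore has a unique critical point, which is its minimiser. The critical-point equation $-\exp_x^{-1}(y)=\epsilon\nabla\psi(x)$ reads exactly $y=T_\epsilon(x)$.

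Strict convexity along geodesics in the ball needs a lower Hessian bound for $c(\cdot,y)$ along geodesics that stay inside $B_{r_0}(y)$. So we shrink $r_0$ until such balls are strongly convex, which again uses compactness of $K'$.

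\textbf{Step 2.} From Step 1, $\phi(y)=\inf_x F_y(x)$ is finite. Moreover $\epsilon\psi(x)+\phi(y)\le c(x,y)$, with equality precisely when $x$ is a minimiser of $F_y$, that is, when $y=T_\epsilon(x)$. The map $T_\epsilon$ is surjective: it is the identity off $K$, and on $K'$ it is a small perturbation of the identity, so degree or injectivity arguments apply. Uniqueness of the minimiser shows it is injective there. For surjectivity it suffices that every $y$ has some minimiser $x$, and Step 1 provides one. Hence $\epsilon\psi=\phi^{c+}$, which shows $\epsilon\psi$ is $c$-concave. It also gives $\partial^{c+}(\epsilon\psi)=\{(x,T_\epsilon x)\}$.

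The final sentence of the lemma then follows from Theorem~\ref{thm:opt}, implication (iii)$\Rightarrow$(i): the plan $(\Id,T_\epsilon)_\#\sigma$ is concentrated on $\partial^{c+}(\epsilon\psi)$ and is therefore optimal.

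\textbf{Main obstacle.} The main difficulty is the uniform local convexity in Step 1: choosing $r_0$, from the compactness of $K'$ alone, so that $c(\cdot,y)$ is smooth and uniformly convex on strongly convex balls $B_{r_0}(y)$. Only then can $\epsilon_0$ be taken as $\min\{r_0/(2C),\,r_0^2/(8C),\,1/(4C)\}$, independent of the particular $\psi$.
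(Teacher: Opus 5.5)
Your proposal is correct and matches the paper's approach. The paper gives no separate proof and only asserts that inspecting Gigli's proof of \cite[Lemma 2.9]{MR2847481} yields a threshold depending on $M$, $K$ and the $C^2$ bounds; your case analysis carries out that inspection with explicit constants ($y\notin K'$ minimised at $x=y$, and for $y\in K'$ uniform convexity of $c(\cdot,y)-\epsilon\psi$ on uniformly sized convex balls). One small correction to Step 2: $\epsilon\psi=\phi^{c+}$ follows from your statement that equality holds precisely when $y=T_\epsilon(x)$, since each $x$ is the unique critical point, hence the minimiser, of $F_{T_\epsilon(x)}$; it does not follow from surjectivity of $T_\epsilon$, so the surjectivity and degree discussion can be dropped.
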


We close this subsection by recording the geodesic structure of the Wasserstein
space. The space $(\Prob_2(M),W_2)$ is geodesic, and its geodesics are the
\emph{displacement interpolations} (cf.\ \cite[Chapter 7]{MR2459454},
\cite{MR3050280}): writing $\mathrm{Geo}(M)$ for the set of geodesics
$\gamma:[0,1]\to M$ and $e_t:\mathrm{Geo}(M)\to M$, $e_t(\gamma):=\gamma(t)$, for
the evaluation maps, a curve $(\mu_t)_{t\in[0,1]}$ in $\Prob_2(M)$ is a geodesic
from $\mu_0$ to $\mu_1$ if and only if there exists
$\Lambda\in\Prob(\mathrm{Geo}(M))$ with
\[
   \mu_t=(e_t)_\#\Lambda\quad(t\in[0,1]),\qquad
   (e_0,e_1)_\#\Lambda\in\Opt(\mu_0,\mu_1);
\]
we call such a $\Lambda$ an \emph{optimal geodesic plan}: it records the geodesic
travelled by each portion of mass, while its endpoint marginal
$(e_0,e_1)_\#\Lambda$ is the induced optimal coupling of $\mu_0$ and $\mu_1$.

\begin{lemma}[Interior regularity, {\cite[Proposition~2.16]{MR3050280}}]\label{lem:georeg}
Let $(\mu_t)_{t\in[0,1]}$ be a geodesic in $(\Prob_2(M),W_2)$. For every
$t\in(0,1)$ there is a unique optimal plan in $\Opt(\mu_0,\mu_t)$, and it is
induced by a map from $\mu_t$; the optimal geodesic plan $\Lambda$ is likewise
unique. In particular $(\Prob_2(M),W_2)$ is non-branching.
\end{lemma}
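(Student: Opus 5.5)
The plan is to prove everything from one geometric fact: \emph{geodesics in the support of an optimal geodesic plan cannot meet at an interior time}. Once this holds, every other claim follows by gluing couplings and applying Theorem~\ref{thm:opt}.

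Fix an optimal geodesic plan $\Lambda$ for $(\mu_t)$ and a time $t\in(0,1)$.

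\textbf{Step 1 (no crossing at interior times).} Suppose $\gamma,\eta\in\supp\Lambda$ satisfy $\gamma(t)=\eta(t)=z$. By Theorem~\ref{thm:opt}, $\supp (e_0,e_1)_\#\Lambda$ is $c$-cyclically monotone, so
\[
d^2(\gamma_0,\gamma_1)+d^2(\eta_0,\eta_1)\le d^2(\gamma_0,\eta_1)+d^2(\eta_0,\gamma_1).
\]
Write $a=d(\gamma_0,z)$, $b=d(z,\gamma_1)$, $a'=d(\eta_0,z)$, $b'=d(z,\eta_1)$. Since both curves are minimizing through $z$, the left side equals $(a+b)^2+(a'+b')^2$. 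By the triangle inequality through $z$, the right side is at most $(a+b')^2+(a'+b)^2$. Moreover $a/b=a'/b'=t/(1-t)$, so expanding the squares forces equality throughout. Equality in the triangle inequality means the broken curve $\gamma|_{[0,t]}*\eta|_{[t,1]}$ is minimizing. A minimizing broken curve is a smooth Riemannian geodesic, so its velocity at $z$ equals that of $\gamma$, and hence $\dot\gamma(t)=\dot\eta(t)$. Uniqueness of solutions of the geodesic ODE then gives $\gamma=\eta$.

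Because cyclical monotonicity holds on the closed support and $\mathrm{Geo}(M)$ is Polish, the restriction $e_t|_{\supp\Lambda}$ is injective. It is continuous, so by Lusin--Souslin it has a Borel inverse on its image. Therefore $\Lambda=G_\#\mu_t$ for a Borel map $G$, and $(e_0,e_t)_\#\Lambda$ is induced by the map $e_0\circ G$ from $\mu_t$.

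\textbf{Step 2 (uniqueness of the plan in $\Opt(\mu_0,\mu_t)$).} Take any $\pi\in\Opt(\mu_0,\mu_t)$ and glue it along $\mu_t$ with $(e_t,e_1)_\#\Lambda$. The composed coupling $\tilde\pi$ of $\mu_0$ and $\mu_1$ satisfies, by Minkowski's inequality,
\[
\Big(\int d^2\,d\tilde\pi\Big)^{1/2}\le W_2(\mu_0,\mu_t)+W_2(\mu_t,\mu_1)=W_2(\mu_0,\mu_1).
\]
Hence $\tilde\pi$ is optimal and equality holds almost everywhere in the triangle inequality. So almost every glued triple $(x,z,y)$ lies on a minimizing geodesic with $z$ at time $t$. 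Choosing such geodesics measurably yields a second optimal geodesic plan $\Lambda'$ with $(e_0,e_t)_\#\Lambda'=\pi$. Step 1 applied to $\Lambda'$ shows that $\pi$ is induced by a map from $\mu_t$.

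Given two plans $\pi_1,\pi_2\in\Opt(\mu_0,\mu_t)$, their average is again optimal, so it too is induced by a map. A convex combination of two map-induced plans is map-induced only if the maps agree $\mu_t$-a.e., hence $\pi_1=\pi_2$. The same argument applies on $[t,1]$.

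\textbf{Step 3 (uniqueness of $\Lambda$ and non-branching).} For almost every $z$, both endpoints $T_0(z)$ and $T_1(z)$ are determined by Step 2. A geodesic through $T_0(z)$, $z$ and $T_1(z)$ with $z$ at time $t$ is minimizing, and it is unique: two such would cross at the interior point $z$ and contradict Step 1's ODE argument. Hence $\Lambda$ is determined.

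For non-branching, suppose two geodesics of $\Prob_2(M)$ agree on $[0,s]$. Pick $t<s$. The plan from $\mu_t$ to each endpoint is unique and induced by maps from $\mu_t$. The underlying Riemannian geodesics agree on $[t,s]$ for almost every particle, so they agree everywhere by ODE uniqueness. Thus the two Wasserstein geodesics coincide.

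The main obstacle is Step 1, specifically the measure-theoretic bookkeeping. One has to make sure cyclical monotonicity is used on the closed support and not merely almost everywhere, that the inverse of $e_t$ is Borel, and that the selection of geodesics in Step 2 can be made measurably. The geometric core, the strict triangle inequality argument, is routine. I would handle the measurability by working with $\supp\Lambda$ directly and using a measurable selection theorem on the closed set of minimizing geodesics.
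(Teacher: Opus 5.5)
Your proposal is correct, and it is essentially the standard argument behind the result the paper simply quotes from \cite[Proposition~2.16]{MR3050280}. Cyclical monotonicity on $\supp\Lambda$, together with non-branching of $M$ (which you check directly via smoothness of minimizers and uniqueness for the geodesic ODE), makes $e_t$ injective on $\supp\Lambda$; gluing an arbitrary $\pi\in\Opt(\mu_0,\mu_t)$ with $(e_t,e_1)_\#\Lambda$ and then averaging gives uniqueness.

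When writing it up, make two points explicit. First, placing $z$ at time $t$ in Step~2 uses the equality case of Minkowski's inequality, which gives $d(x,z)=t\,d(x,y)$; equality in the triangle inequality alone is not enough. Second, agreement on $[t,s]$ in the non-branching step uses that a subsegment of a minimizing geodesic with one endpoint at an interior time is the unique minimizer between its endpoints.
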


In particular, if a geodesic $(\mu_t)_{t\in[0,1]}$ in $(\Prob_2(M),W_2)$ has a
finitely supported interior point $\mu_s=\sum_{i=1}^{n}a_i\delta_{x_i}$ at some
$s\in(0,1)$, with the $x_i$ distinct, then there are geodesics
$\gamma_1,\dots,\gamma_n\in\mathrm{Geo}(M)$ with $\gamma_i(s)=x_i$ and
$\mu_t=\sum_{i=1}^{n}a_i\delta_{\gamma_i(t)}$ for all $t\in[0,1]$; these
trajectories are pairwise distinct at every interior time, coinciding at most at
$t\in\{0,1\}$. In other words, such a geodesic cannot be continued beyond the
first collision of two trajectories.

\section{Infinitesimal structure of the Wasserstein space}

To motivate what follows, consider the discrete measure
$\mu=\sum_{i=1}^{n}a_i\delta_{x_i}$ with the $x_i$ distinct. Then for $\epsilon>0$ sufficiently small, \[B_\epsilon(x_1) \times \dots \times B_\epsilon(x_n) \to \mathscr P_2(M), \, (y_1,\dots, y_n) \mapsto \sum_{i=1}^n a_i \delta_{y_i},\] is an isometric embedding, after an appropriate rescaling of the metric on the balls. Therefore, since any geodesic in $B_\epsilon(x_1) \times \dots \times B_\epsilon(x_n)$ is two-sided around $(x_1,\dots, x_n) \in B_\epsilon(x_1) \times \dots \times B_\epsilon(x_n)$, we see that $T_{x_1}M \times \dots T_{x_n}M$ isometrically embeds into $\text{InnC}_\mu\mathscr P_2(M)$. 

Conversely by the above, any geodesic having $\mu$ as an interior point is of the form \[
   \mu_t:=\sum_{i=1}^{n}a_i\,\delta_{\gamma_i(t)},\qquad t\in(-\eta,\eta),
\] where $\eta>0$ and the $\gamma_i$ are geodesics in $M$. This shows that the embedding is surjective and thus we have shown that 
\[
   \mathrm{InnC}_\mu\Prob_2(M)\;\cong\;T_{x_1}M\times\cdots\times T_{x_n}M,
\]
a Euclidean space of dimension $\dim (M) \cdot n=\dim(M)\cdot|\supp\mu|$. 

In what follows we seek to determine the inner tangent cones for arbitrary measures, in possibly non-compact Riemannian manifolds $M$.

The infinitesimal geometry and tangent cones of $(\Prob_2(M),W_2)$ were studied by Gigli in
\cite{MR2847481} (see also \cite{MR3637959} and the recent works \cite{MR4918571, aussedat2026localitycentredtangentcones}); and the present section principally builds on this work. We begin by recalling notation and results from there.

Let $\mu\in\Prob_2(M)$,
by $\Prob_2(TM)_\mu$ we denote the set of Borel
probability measures $\gamma$ on the tangent bundle such that
$(\pi^M)_\#\gamma=\mu$ and
\[
\int|v|_x^2\,d\gamma(x,v)<\infty,
\]
where $\pi^M:TM\to M$ is the canonical projection. Elements of
$\Prob_2(TM)_\mu$ are called \emph{plans}, and the \emph{exponential} of a
plan $\gamma\in\Prob_2(TM)_\mu$ is $\exp_\mu(\gamma):=(\exp)_\#\gamma$. For
$\lambda\in\R$, the \emph{rescaling} of $\gamma$ by $\lambda$ is
$\lambda\cdot\gamma:=(\pi^M,\lambda\pi^1)_\#\gamma$, where $\pi^1(x,v):=v$.
With this notation, a curve $(\mu_t)_{t\in[0,1]}$ is a constant speed
geodesic from $\mu$ to $\nu$ if and only if there exists a plan
$\gamma\in\Prob_2(TM)_\mu$ with $\exp_\mu(\gamma)=\nu$ and
$\int|v|_x^2\,d\gamma=W_2^2(\mu,\nu)$ such that
\[
\mu_t=\exp_\mu(t\cdot\gamma),\qquad t\in[0,1];
\]
under these constraints the plan $\gamma$ is uniquely determined by the
geodesic \cite[Theorem 1.11]{MR2847481}. More generally, by rescaling the parameter to $[0,1]$ one associates to every
constant speed geodesic $(\mu_t)_{t\in[0,a]}$ issuing from $\mu$ a unique plan
$\gamma\in\Prob_2(TM)_\mu$, characterized by $\mu_t=\exp_\mu(t\cdot\gamma)$ for
$t\in[0,a]$; and two such geodesics $(\mu_t)$ and $(\tilde\mu_t)$, defined on
$[0,a]$ and $[0,\tilde a]$ and issuing from the same measure $\mu$, coincide on
$[0,\min\{a,\tilde a\}]$ if and only if their associated plans coincide
\cite[Proposition 1.12]{MR2847481}. We shall also use that for $t\in(0,1]$
the rescaled plan $t\cdot\gamma$ belongs to $\exp_\mu^{-1}(\mu_t)$, i.e.\
$(\pi^M,\exp)_\#(t\cdot\gamma)$ is optimal and
$\int|v|^2\,d(t\cdot\gamma)=W_2^2(\mu,\mu_t)$.

For $\gamma,\eta\in\Prob_2(TM)_\mu$ with disintegrations
$(\gamma_x)_{x\in M}$, $(\eta_x)_{x\in M}$ with respect to $\pi^M$, set
\[
W_\mu(\gamma,\eta):=\Big(\int_M W_2^2(\gamma_x,\eta_x)\,d\mu(x)\Big)^{1/2}.
\]
By \cite[Proposition 5.2]{MR2847481}, $W_\mu$ is a distance,
$(\Prob_2(TM)_\mu,W_\mu)$ is a Polish space, and with $T^2M:=\{(x,v_1,v_2):v_1,v_2\in T_xM\}$,
\begin{equation}\label{eq:Wmucoupling}
W_\mu^2(\gamma,\eta)=\inf\int|v_1-v_2|^2\,d\alpha(x,v_1,v_2),
\end{equation}
where the infimum (which is always attained) is taken over all \emph{admissible couplings} $\alpha\in\Prob(T^2M)$, characterized by
$(\pi^M,\pi^1)_\#\alpha=\gamma$ and $(\pi^M,\pi^2)_\#\alpha=\eta$.

The set $\DirB_\mu\subset\Prob_2(TM)_\mu$ is defined as
\[
\begin{aligned}
\DirB_\mu:=\big\{\gamma\in\Prob_2(TM)_\mu:\ &t\mapsto\exp_\mu(t\cdot\gamma)\\
&\text{is a geodesic on a right neighbourhood of }0\big\},
\end{aligned}
\]
and the \emph{geometric tangent space} $\TanB_\mu\Prob_2(M)$ is the closure
of $\DirB_\mu$ with respect to $W_\mu$ \cite[Definition 5.4]{MR2847481}. For
$\gamma\in\DirB_\mu$ we denote by
$e(\gamma)\in\mathrm{D}_\mu\Prob_2(M)$ the direction represented by the
geodesic $t\mapsto\exp_\mu(t\cdot\gamma)$.

By $L^2_\mu:=L^2_\mu(TM)$ we denote the Hilbert space of $L^2$ vector fields
on $M$ with respect to $\mu$, with inner product
\[
\langle v,w\rangle_\mu:=\int_M g_x\big(v(x),w(x)\big)\,d\mu(x).
\]
For $v\in L^2_\mu$ and $t\in\R$ we write $\exp(tv)$ for the map
$x\mapsto\exp_x(tv(x))$. The map $\iota_\mu:L^2_\mu\to\Prob_2(TM)_\mu$,
$v\mapsto(\Id,v)_\#\mu$ is isometric, since
$W_\mu^2(\iota_\mu v,\iota_\mu w)=\int|v-w|^2\,d\mu$; note
$\exp_\mu(t\cdot\iota_\mu(v))=\exp(tv)_\#\mu$. The \emph{formal tangent
space} is the `space of gradients'
\[
\mathrm{Tan}_\mu\Prob_2(M)
:=\overline{\{\nabla\varphi:\varphi\in C_c^\infty(M)\}}^{\,L^2_\mu}.
\]

In \cite{MR2847481} Gigli proved:
\begin{itemize}
\item The following three subsets of $L^2_\mu$ coincide
\cite[Corollary 6.4]{MR2847481} (no compactness of $M$ is required here,
cf.\ \cite[Section 6]{MR2847481}):
\begin{enumerate}[(i)]
\item $\mathrm{Tan}_\mu\Prob_2(M)$,
\item $\overline V^{\,L^2_\mu}$, where
\[
\begin{aligned}
V:=\big\{v\in L^2_\mu:\ &\exists\,\epsilon>0\text{ such that}\\
&(\Id,\exp(tv))_\#\mu\text{ is optimal for all }t\in[0,\epsilon]\big\},
\end{aligned}
\]
\item $\{v\in L^2_\mu:\ \iota_\mu(v)\in\TanB_\mu\Prob_2(M)\}$.
\end{enumerate}
\item If $M$ is compact: all upper angles between geodesics issuing from a
common measure exist in the strict sense \cite[Theorem 3.4]{MR2847481}, and
the natural map $e:(\DirB_\mu,W_\mu)\to(\mathrm{D}_\mu\Prob_2(M),\delta)$ is
an isometry, which therefore extends to an isometry $\overline e$
between $\TanB_\mu\Prob_2(M)$ and the tangent cone
$\mathrm{C}_\mu\Prob_2(M)$ \cite[Theorem 5.5]{MR2847481}.
\end{itemize}
For Wasserstein spaces over compact Alexandrov spaces
see Ohta \cite{MR2503990}.

In the sequel $M$ is only assumed to be complete and connected and no compactness is necessary. 

\begin{lemma}[Lipschitz estimate]\label{lem:lip}
Let $\mu\in\Prob_2(M)$ and $\gamma,\eta\in\Prob_2(TM)_\mu$. Then
\[
\limsup_{t\to 0}\,
\frac{W_2\big(\exp_\mu(t\cdot\gamma),\exp_\mu(t\cdot\eta)\big)}{t}
\ \le\ W_\mu(\gamma,\eta).
\]
In particular, if $\gamma,\eta\in\DirB_\mu$, then
$\delta(e(\gamma),e(\eta))\le W_\mu(\gamma,\eta)$.
\end{lemma}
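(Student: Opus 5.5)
We prove the estimate by building, for each $t$, an explicit coupling between $\exp_\mu(t\cdot\gamma)$ and $\exp_\mu(t\cdot\eta)$ from an optimal admissible coupling $\alpha$ of $\gamma$ and $\eta$, and then comparing geodesic distances with $t|v_1-v_2|$ pointwise.

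\textbf{Coupling.} By \eqref{eq:Wmucoupling}, choose an admissible coupling $\alpha\in\Prob(T^2M)$ with
\[
\int|v_1-v_2|^2\,d\alpha=W_\mu^2(\gamma,\eta).
\]
For $t>0$ define $F_t(x,v_1,v_2):=(\exp_x(tv_1),\exp_x(tv_2))$. Since $(\pi^M,\pi^1)_\#\alpha=\gamma$ and $\exp_\mu(t\cdot\gamma)=(\exp_x(tv))_\#\gamma$, the first marginal of $(F_t)_\#\alpha$ is $\exp_\mu(t\cdot\gamma)$, and likewise the second is $\exp_\mu(t\cdot\eta)$. Hence $(F_t)_\#\alpha$ is a transport plan and
\[
\frac{W_2^2\big(\exp_\mu(t\cdot\gamma),\exp_\mu(t\cdot\eta)\big)}{t^2}\le\int\frac{d^2(\exp_x(tv_1),\exp_x(tv_2))}{t^2}\,d\alpha(x,v_1,v_2).
\]

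\textbf{Pointwise limit.} Set $f_t(x,v_1,v_2):=d(\exp_x(tv_1),\exp_x(tv_2))/t$. Since $\exp_x$ is smooth with $d(\exp_x)_0=\Id$, we have
\[
d(\exp_x(tv_1),\exp_x(tv_2))=t|v_1-v_2|+o(t)
\]
for fixed $(x,v_1,v_2)$. To see this, note that the curve $s\mapsto\exp_x(t((1-s)v_1+sv_2))$ has length $t|v_1-v_2|(1+o(1))$, which gives the upper bound, and that is all we need. So $\limsup_{t\to0}f_t\le|v_1-v_2|$ pointwise.

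\textbf{Domination (main obstacle).} To pass the $\limsup$ under the integral we use reverse Fatou, which requires an integrable dominating function. The triangle inequality through $x$ gives $f_t\le|v_1|+|v_2|$, so $f_t^2\le 2(|v_1|^2+|v_2|^2)$. This bound is $\alpha$-integrable because $\gamma,\eta\in\Prob_2(TM)_\mu$. Reverse Fatou then yields
\[
\limsup_{t\to0}\int f_t^2\,d\alpha\le\int|v_1-v_2|^2\,d\alpha=W_\mu^2(\gamma,\eta),
\]
which proves the claim. Without this triangle-inequality bound, unbounded speeds in the tails of $\gamma$ and $\eta$ would be the difficulty; with it, no compactness of $M$ is needed.

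\textbf{Consequence for directions.} If $\gamma,\eta\in\DirB_\mu$, the geodesics $t\mapsto\exp_\mu(t\cdot\gamma)$ and $t\mapsto\exp_\mu(t\cdot\eta)$ represent $e(\gamma)$ and $e(\eta)$. Therefore $\delta(e(\gamma),e(\eta))$ is exactly the left-hand $\limsup$, and the bound $\delta(e(\gamma),e(\eta))\le W_\mu(\gamma,\eta)$ follows immediately.
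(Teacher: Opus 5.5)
Your proof is correct and follows the paper's argument essentially verbatim. Both push an optimal admissible coupling $\alpha$ forward under $(x,v_1,v_2)\mapsto(\exp_x(tv_1),\exp_x(tv_2))$, dominate the integrand by $(|v_1|+|v_2|)^2\in L^1(\alpha)$, and pass to the limit. The one difference is that you apply reverse Fatou using only the pointwise bound $\limsup_{t\to0} f_t\le|v_1-v_2|$, whereas the paper applies dominated convergence using the full pointwise limit; this needs slightly less but is a cosmetic variation.
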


\begin{proof}
Let $\alpha$ be an admissible coupling attaining the infimum in
\eqref{eq:Wmucoupling}. The map
$T^2M\ni(x,v_1,v_2)\mapsto(\exp_x(tv_1),\exp_x(tv_2))$ pushes $\alpha$ to a
transport plan between $\exp_\mu(t\cdot\gamma)$ and
$\exp_\mu(t\cdot\eta)$, so that for $t\neq0$
\[
\frac{W_2^2\big(\exp_\mu(t\cdot\gamma),\exp_\mu(t\cdot\eta)\big)}{t^2}
\le\int\Big(\frac{d\big(\exp_x(tv_1),\exp_x(tv_2)\big)}{t}\Big)^2
d\alpha(x,v_1,v_2).
\]
For a fixed $(x,v_1,v_2)$ the integrand converges to $|v_1-v_2|^2$ as
$t\to 0$, and is dominated by
$(|v_1|+|v_2|)^2\in L^1(\alpha)$. The claim follows by dominated convergence and
\eqref{eq:Wmucoupling}.
\end{proof}

\begin{lemma}[Isometric along gradient directions]\label{lem:grad}
Let $\mu\in\Prob_2(M)$ and $\varphi,\psi\in C_c^\infty(M)$. Then
\[
\lim_{t\to 0^+}\frac{W_2\big(\exp(t\nabla\varphi)_\#\mu,
\exp(t\nabla\psi)_\#\mu\big)}{t}
=\|\nabla\varphi-\nabla\psi\|_{L^2_\mu}.
\]
\end{lemma}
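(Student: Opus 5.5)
The upper bound comes straight from Lemma~\ref{lem:lip}. Take $\gamma=\iota_\mu(\nabla\varphi)$ and $\eta=\iota_\mu(\nabla\psi)$. Since $W_\mu(\gamma,\eta)=\|\nabla\varphi-\nabla\psi\|_{L^2_\mu}$ and $\exp_\mu(t\cdot\iota_\mu v)=\exp(tv)_\#\mu$, we get
\[
\limsup_{t\to0^+}\frac{W_2\big(\exp(t\nabla\varphi)_\#\mu,\exp(t\nabla\psi)_\#\mu\big)}{t}\le\|\nabla\varphi-\nabla\psi\|_{L^2_\mu}.
\]
All the work is therefore in the matching lower bound for the $\liminf$.

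For the lower bound I use Lemma~\ref{lem:unif} together with Kantorovich duality, i.e.\ the weak dual inequality \eqref{eq:weakdual}. Fix a compact $K$ containing both supports and set $\mu_t:=\exp(t\nabla\varphi)_\#\mu$ and $\nu_t:=\exp(t\nabla\psi)_\#\mu$. Since $\mu_t$ and $\nu_t$ are both images of the same $\mu$, the idea is to compare them with an intermediate measure. Let $\chi:=\psi-\varphi$. By Lemma~\ref{lem:unif}, for small $t$ the function $t\chi$ is $c$-concave, and the plan $(\Id,\exp(-t\nabla\chi))_\#\sigma$ is optimal for every $\sigma$. This by itself does not transport $\nu_t$ to $\mu_t$, so instead I bound $W_2$ from below directly by duality. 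For any $c$-concave $f$, \eqref{eq:weakdual} gives
\[
\tfrac12W_2^2(\nu_t,\mu_t)\ge\int f\,d\nu_t+\int f^{c+}\,d\mu_t .
\]
I take $f=t\chi_\lambda$ with a scaled test function $\chi_\lambda$ ($\lambda$ a fixed constant, $\chi_\lambda$ to be optimised) and evaluate $f^{c+}$ via the explicit superdifferential of Lemma~\ref{lem:unif}: $(t\chi)^{c+}(\exp_x(-t\nabla\chi(x)))=\tfrac{t^2}2|\nabla\chi(x)|^2-t\chi(x)$. Away from the graph, $(t\chi)^{c+}$ is an infimum, so it is only bounded above by these values, which is the wrong direction; I would handle that with a first-order expansion. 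The cleaner route is the following. By Taylor expansion with uniform bounds on $K$, $(t\chi)^{c+}(y)=-t\chi(y)+O(t^2)$ uniformly. Similarly, $\int f\,d\nu_t=t\int\chi(\exp_x(t\nabla\psi))\,d\mu=t\int\chi\,d\mu+t^2\int\langle\nabla\chi,\nabla\psi\rangle d\mu+o(t^2)$. The remaining terms are expanded at second order in the same way.

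Carried out with $f=t\lambda\chi$, this yields after dividing by $t^2$:
\[
\liminf\frac{W_2^2}{2t^2}\ge\lambda\|\nabla\chi\|^2_{L^2_\mu}-\tfrac{\lambda^2}{2}\|\nabla\chi\|^2_{L^2_\mu}.
\]
Taking $\lambda=1$ gives $\liminf W_2/t\ge\|\nabla\chi\|_{L^2_\mu}$, since the cross terms $\pm t^2\lambda\int\langle\nabla\chi,\nabla\varphi\rangle$ cancel between the two marginals. The exact evaluation of $(t\chi)^{c+}$ is needed at the points $\exp_x(t\nabla\varphi(x))$, which are not on the graph of $\exp(-t\nabla\chi)$, so the uniform second-order expansion must hold there too. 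That expansion is the \textbf{main obstacle}. I would obtain it from $(t\chi)^{c+}(y)=\inf_z\big(\tfrac12d^2(y,z)-t\chi(z)\big)$: the minimiser $z$ lies within $O(t)$ of $y$ with $y=\exp_z(-t\nabla\chi(z))$, and the Hessian bound in Lemma~\ref{lem:unif} controls the error. This gives $(t\chi)^{c+}(y)=-t\chi(y)-\tfrac{t^2}{2}|\nabla\chi(y)|^2+o(t^2)$ uniformly on a compact set; outside $K$ both sides vanish. Combined with the upper bound, the limit exists and equals $\|\nabla\varphi-\nabla\psi\|_{L^2_\mu}$.
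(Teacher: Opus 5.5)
Your proof is correct, but you reach the lower bound by a genuinely different route from the paper; the upper bound via Lemma~\ref{lem:lip} is the same. Both arguments rest on the same function $t\chi=t(\psi-\varphi)$, used in opposite ways.

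\emph{The paper's route (primal).} It picks up exactly where you stopped (``this by itself does not transport $\nu_t$ to $\mu_t$''). Set $H_t=\exp(-t\nabla\chi)=\exp(t\nabla\xi)$ with $\xi=\varphi-\psi$. Lemma~\ref{lem:unif} and minimality of short geodesics give the exact identity $W_2(\sigma,(H_t)_\#\sigma)=t\|\nabla\xi\|_{L^2_\sigma}$ for every $\sigma$. For $\sigma=\nu_t$ this is at least $t\|\nabla\xi\|_{L^2_\mu}-C_1t^2$. The wrong target is then repaired by the triangle inequality: coupling through $\mu$ shows $W_2((H_t)_\#\nu_t,\mu_t)\le C_2t^2$, because the curves $t\mapsto H_t(\exp_x(t\nabla\psi(x)))$ and $t\mapsto\exp_x(t\nabla\varphi(x))$ agree to first order.

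\emph{Your route (dual).} You use $t\chi$ as an almost optimal dual certificate in \eqref{eq:weakdual}, and your second-order bookkeeping is right. The first-order terms $\pm t\int\chi\,d\mu$ cancel, and after writing $\nabla\psi=\nabla\chi+\nabla\varphi$ what remains is $\tfrac{t^2}{2}\|\nabla\chi\|^2_{L^2_\mu}+o(t^2)$.

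\emph{What each buys.} The paper's route reuses the machinery of Lemma~\ref{lem:unif}, which is needed again in Proposition~\ref{geodesics}, and it exhibits an explicit almost optimal plan. Yours is more elementary than you make it look, because the ``main obstacle'' is one-sided: only a lower bound on $(t\chi)^{c+}$ enters, and it follows from $C:=\|\nabla^2\chi\|_\infty$ alone. Write $z=\exp_y(v)$ along a minimizing geodesic, so that $\chi(z)\le\chi(y)+\langle\nabla\chi(y),v\rangle+\tfrac C2|v|^2$. Then for $tC<1$ and every $y\in M$,
\[
(t\chi)^{c+}(y)\ \ge\ -t\chi(y)+\inf_{v\in T_yM}\Big(\tfrac{1-tC}{2}|v|^2-t\langle\nabla\chi(y),v\rangle\Big)
=-t\chi(y)-\frac{t^2|\nabla\chi(y)|^2}{2(1-tC)}.
\]
This needs no $c$-concavity, no Lemma~\ref{lem:unif}, and no injectivity radius bounds.

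Your own minimiser argument also works, once you note that a minimiser of $z\mapsto c(y,z)-t\chi(z)$ exists by properness of $M$. Then $(z,y)\in\partial^{c+}(t\chi)$, and Lemma~\ref{lem:unif} forces $y=\exp_z(-t\nabla\chi(z))$. In particular every point lies in the image of $\exp(-t\nabla\chi)$, so your worry about evaluation points ``not on the graph'' does not arise.
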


For compact $M$ this is a consequence of \cite[Theorem 5.5]{MR2847481}; the
point here is to record that it holds even without assuming compactness.

\begin{proof}
Write $\mu_t:=\exp(t\nabla\varphi)_\#\mu$, $\nu_t:=\exp(t\nabla\psi)_\#\mu$
and $\xi:=\varphi-\psi\in C_c^\infty(M)$. The inequality $\limsup\le$ follows
from Lemma \ref{lem:lip}. In particular we
may assume $\|\nabla\xi\|_{L^2_\mu}>0$, for otherwise the claim is already
shown.

It remains to show that $\liminf \geq$: set $H_t:=\exp(t\nabla\xi)$ and let $K\subset M$
be a compact set containing $\supp\varphi\cup\supp\psi$. By Lemma
\ref{lem:unif}, applied to $-\xi$, there is $t_0>0$ such that
$(\Id,H_t)_\#\sigma$ is an optimal plan for every $t\in(0,t_0]$ and every
$\sigma\in\Prob_2(M)$. Choosing $t_0$ smaller if necessary, the geodesic
$r\mapsto\exp_y(r\nabla\xi(y))$ is minimizing on $[0,t_0]$ for every
$y\in M$ (the injectivity radius is bounded below on a neighbourhood of $K$
and $\nabla\xi$ vanishes elsewhere). Thus the above plan has cost
$t^2\int|\nabla\xi|^2\,d\sigma=t^2\|\nabla\xi\|_{L^2_\sigma}^2$, and so
\[
W_2\big(\sigma,(H_t)_\#\sigma\big)=t\,\|\nabla\xi\|_{L^2_\sigma},
\qquad t\in(0,t_0],\ \sigma\in\Prob_2(M).
\]
Take $\sigma:=\nu_t$. The function $|\nabla\xi|^2$ is Lipschitz and vanishes
off $K$, and the displacement under $\exp(t\nabla\psi)$ is at most
$t\|\nabla\psi\|_\infty$; hence $\nu_t=\exp(t\nabla\psi)_\#\mu$ satisfies
\[
\|\nabla\xi\|_{L^2_{\nu_t}}^2=\int|\nabla\xi|^2\,d\nu_t
=\|\nabla\xi\|_{L^2_\mu}^2+O(t).
\]
Since $\|\nabla\xi\|_{L^2_\mu}>0$, and the fact that
$\sqrt{a^2+O(t)}=a+O(t)$, we see that\[
W_2\big(\nu_t,(H_t)_\#\nu_t\big)=t\,\|\nabla\xi\|_{L^2_{\nu_t}}
\ \ge\ t\,\|\nabla\xi\|_{L^2_\mu}-C_1t^2.
\]
Moreover, for each $x$ consider the curves
$t\mapsto H_t\big(\exp_x(t\nabla\psi(x))\big)$ and
$t\mapsto\exp_x(t\nabla\varphi(x))$: they coincide at $t=0$ and have there
the same derivative $\nabla\xi(x)+\nabla\psi(x)=\nabla\varphi(x)$; both are
constant for $x\notin K$, so by smoothness and compactness their distance is
bounded by $C_2t^2$, uniformly in $x$. Coupling $(H_t)_\#\nu_t$ with $\mu_t$
through $\mu$ thus gives $W_2\big((H_t)_\#\nu_t,\mu_t\big)\le C_2t^2$, and by
the triangle inequality
\[
W_2(\mu_t,\nu_t)\ \ge\ W_2\big(\nu_t,(H_t)_\#\nu_t\big)
-W_2\big((H_t)_\#\nu_t,\mu_t\big)\ \ge\ t\,\|\nabla\xi\|_{L^2_\mu}-(C_1+C_2)t^2,
\]
whence $\liminf_{t\to 0^+}W_2(\mu_t,\nu_t)/t\ge\|\nabla\xi\|_{L^2_\mu}$.
\end{proof}

\begin{proposition}[Two-sided geodesics]\label{geodesics}
Let $M$ be a connected, complete Riemannian manifold, let
$\mu\in\Prob_2(M)$ and $\varphi\in C_c^\infty(M)$. Set
\[
\mu_t:=(\exp(t\nabla\varphi))_\#\mu,
\]
then there exists $\eta>0$ so that $t\mapsto\mu_t$ is a constant speed
geodesic on $(-\eta,\eta)$.
\end{proposition}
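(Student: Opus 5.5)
The plan is to show that for all $s,t\in(-\eta,\eta)$ the plan $(\exp(s\nabla\varphi),\exp(t\nabla\varphi))_\#\mu$ is optimal between $\mu_s$ and $\mu_t$, and that its cost is $(t-s)^2\|\nabla\varphi\|_{L^2_\mu}^2$. Then
\[
W_2(\mu_s,\mu_t)=|t-s|\,\|\nabla\varphi\|_{L^2_\mu},
\]
which is exactly the statement that $t\mapsto\mu_t$ is a constant speed geodesic on $(-\eta,\eta)$.

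\emph{The cost.} For $x$ in a neighbourhood of $K:=\supp\varphi$ the geodesic $r\mapsto\exp_x(r\nabla\varphi(x))$ is minimizing on any interval of length at most $2\eta$, provided $\eta$ is small. This uses a lower bound on the injectivity radius near $K$ together with $\|\nabla\varphi\|_\infty<\infty$, as in the proof of Lemma~\ref{lem:grad}. Off $K$ the field $\nabla\varphi$ vanishes, so those points do not move. Hence $d(\exp_x(s\nabla\varphi(x)),\exp_x(t\nabla\varphi(x)))=|t-s||\nabla\varphi(x)|$ for every $x$, and integrating against $\mu$ gives the claimed cost.

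\emph{Optimality via the uniform Lemma.} The idea is to re-base at $\mu_s$. Write $T_s:=\exp(s\nabla\varphi)$. Since $T_s$ equals the identity off $K$ and is $C^1$-close to the identity uniformly for $|s|\le\eta$, the inverse function theorem together with a properness/degree argument makes $T_s$ a diffeomorphism of $M$ for $\eta$ small. The candidate plan is then $(\Id,\exp(t\nabla\varphi)\circ T_s^{-1})_\#\mu_s$.

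The key step is to write the displacement map as a gradient exponential:
\[
\exp(t\nabla\varphi)\circ T_s^{-1}(y)=\exp_y\big((t-s)\nabla\psi_s(y)\big),
\]
where $\psi_s$ solves the Hamilton--Jacobi equation $\partial_s\psi_s+\tfrac12|\nabla\psi_s|^2=0$ with $\psi_0=\varphi$. I construct $\psi_s$ by characteristics: set $\psi_s(\gamma_x(s)):=\varphi(x)-\tfrac s2|\nabla\varphi(x)|^2$, where $\gamma_x(r)=\exp_x(r\nabla\varphi(x))$. This is well defined because $T_s$ is a diffeomorphism. The standard Lagrangian-submanifold argument (the geodesic flow preserves the Lagrangian graph of $d\varphi$ in $T^*M$, which remains a graph over $M$ while $T_s$ is a diffeomorphism) gives $\nabla\psi_s(\gamma_x(s))=\gamma_x'(s)$. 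Then $\exp_y(r\nabla\psi_s(y))=\gamma_x(s+r)$ for $y=\gamma_x(s)$, which is the identity above.

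Moreover $\psi_s$ vanishes off $T_s(K)=K$, because $\varphi=0$ and $\nabla\varphi=0$ there. Hence $\psi_s\in C_c^\infty(M)$ with support in $K$. Its $C^0$, $C^1$ and $C^2$ norms are bounded by a constant $C$ uniformly for $|s|\le\eta$, by smooth dependence on $s$ and compactness. Now apply Lemma~\ref{lem:unif} with this $K$ and $C$: take $\psi=-\psi_s$ if $t>s$ and $\psi=\psi_s$ if $t<s$, with $\epsilon=|t-s|\le 2\eta\le\epsilon_0$ after shrinking $\eta$. This shows that $(\Id,\exp((t-s)\nabla\psi_s))_\#\sigma$ is optimal for every $\sigma$, in particular for $\sigma=\mu_s$. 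Pushing back through $T_s$, this plan is $(\exp(s\nabla\varphi),\exp(t\nabla\varphi))_\#\mu$, which is therefore optimal.

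\emph{Main obstacle.} The hard step is the Hamilton--Jacobi/characteristics step: showing that the transported velocity field is again a compactly supported gradient, with $C^2$ bounds uniform in $s$. The uniformity is essential so that a single $\epsilon_0$ from Lemma~\ref{lem:unif} covers all $s\in(-\eta,\eta)$. I would handle it by the inverse function theorem on the family of maps $T_s$, which are compactly supported perturbations of the identity, combined with the symplectic (Lagrangian graph) description of the geodesic flow to obtain the gradient property.
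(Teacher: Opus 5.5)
Your route is the paper's: re-base at an intermediate time, write $F_t\circ F_s^{-1}=\exp((t-s)\nabla\psi_s)$ for a potential $\psi_s$ supported in $K$ and built by characteristics (the Hopf--Lax evolution of $\varphi$), and apply Lemma~\ref{lem:unif} with constants uniform in $s$. The one structural difference is minor. You apply the lemma at every base time $s$, getting optimality of $(F_s,F_t)_\#\mu$ for all pairs directly. The paper applies it only at $-\eta'$, for the symmetric pair $(F_{-\eta'},F_{\eta'})_\#\mu$. It then obtains all other pairs from the upper bound $W_2(\mu_s,\mu_t)\le (t-s)\|\nabla\varphi\|_{L^2_\mu}$ and the triangle inequality. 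Both versions need the same uniform bounds.

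There is, however, a concrete error in the key step. With $\psi_s(\gamma_x(s)):=\varphi(x)-\tfrac s2|\nabla\varphi(x)|^2$, the identity $\nabla\psi_s(\gamma_x(s))=\gamma_x'(s)$ is false. This $\psi_s$ also does not solve the Hamilton--Jacobi equation you state. Along a characteristic, $\tfrac{d}{ds}\psi_s(\gamma_x(s))=\partial_s\psi_s+|\nabla\psi_s|^2=\tfrac12|\nabla\psi_s|^2=\tfrac12|\nabla\varphi(x)|^2$. So the correct definition is $\psi_s(\gamma_x(s)):=\varphi(x)+\tfrac s2|\nabla\varphi(x)|^2$, which is exactly the paper's $\varphi_r$.

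You can check this directly; this is the paper's first-variation argument. For $v\in T_xM$, $dF_s(v)=J(s)$, where $J$ is the Jacobi field along $\gamma_x$ with $J(0)=v$ and $J'(0)=\nabla_v\nabla\varphi$. Since $\langle\gamma_x',J\rangle''=-\langle R(J,\gamma_x')\gamma_x',\gamma_x'\rangle=0$, you get
\[
\langle\gamma_x'(s),dF_s(v)\rangle=\langle\nabla\varphi,v\rangle+s\langle\nabla\varphi,\nabla_v\nabla\varphi\rangle=d\big(\varphi+\tfrac s2|\nabla\varphi|^2\big)(v).
\]
Hence $(\nabla\psi_s)\circ F_s=\gamma_x'(s)$ holds precisely for the plus sign. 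For example, take $\varphi(x)=x^2/2$ near $0\in\R$, so $\gamma_x(s)=(1+s)x$. The correct potential is $\psi_s(y)=y^2/(2(1+s))$, whereas your formula gives $\psi_s'((1+s)x)=\tfrac{1-s}{1+s}\,x\neq x$.

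The Lagrangian-graph argument you cite only shows that the transported covector field is closed. Identifying its primitive requires the action computation above, and that is exactly where the sign is determined. With the sign corrected, the rest of your argument goes through and coincides with the paper's.
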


\begin{proof}
Write $F_t:=\exp(t\nabla\varphi)$, i.e.\ $F_t(x)=\exp_x(t\nabla\varphi(x))$, so
that $\mu_t=(F_t)_\#\mu$. Let $K\subset M$ be a compact set containing
$\supp\varphi$. Since $M$ is
complete, $K$ is compact and $\nabla\varphi$ is bounded, we can choose
$\eta_0>0$ such that for every $x\in M$ the Riemannian geodesic
$\gamma_x:\R\to M$, $r\mapsto\exp_x(r\nabla\varphi(x))$, is minimizing on
$[-\eta_0,\eta_0]$. By a standard argument we can choose
$\eta_1\in(0,\eta_0)$ such that $F_r$ is a smooth diffeomorphism of $M$ for
every $|r|<2\eta_1$; clearly $F_r=\Id$ outside $K$, and so $F_r(K)=K$.
For $|r|\le\eta_1$ define $\varphi_r\in C^\infty_c(M)$ by
\[
\varphi_r(F_r(x)):=\varphi(x)+\frac r2|\nabla\varphi(x)|^2,
\]
such that $\supp\varphi_r\subset F_r(K)=K$. Using the first variation formula, we show:
\begin{equation}\label{eq:firstvar}
\nabla\varphi_r(F_r(x))=\gamma_x'(r)
=\frac{d}{dt}\Big|_{t=r}\exp_x(t\nabla\varphi(x)).
\end{equation}
Since $\gamma_x$
is a geodesic,
$\gamma_x(t)=\exp_{\gamma_x(s)}\big((t-s)\gamma_x'(s)\big)$, and
\eqref{eq:firstvar} gives, for all $y\in M$ and $|s|,|t|\le\eta_1$,
\begin{equation}\label{eq:flow}
F_t\circ F_s^{-1}(y)=\exp_y\big((t-s)\nabla\varphi_s(y)\big).
\end{equation}

The map $(r,x)\mapsto(r,F_r(x))$ is a smooth bijection of
$(-2\eta_1,2\eta_1)\times M$ with everywhere invertible differential, so its
inverse is smooth; consequently $(r,y)\mapsto\varphi_r(y)$ is smooth on
$[-\eta_1,\eta_1]\times M$ and
\[
C:=\sup_{|r|\le\eta_1}\Big(\|\varphi_r\|_\infty+\|\nabla\varphi_r\|_\infty
+\|\nabla^2\varphi_r\|_\infty\Big)<\infty,
\]
the suprema being finite because all functions are supported in the fixed
compact set $K$. Let $\epsilon_0=\epsilon_0(M,K,C)$ be given by Lemma
\ref{lem:unif} and set $\eta_2:=\min\{\epsilon_0,\eta_1\}$,
$\eta:=\eta_2/2$.

Set $S:=\|\nabla\varphi\|_{L^2_\mu}$. Since each $\gamma_x$ is minimizing on
$[-\eta_0,\eta_0]$ with speed $|\nabla\varphi(x)|$ and
$(F_s,F_t)_\#\mu$ is a transport plan, we know that for $-\eta<s\le t<\eta$,
\begin{equation}\label{eq:upper}
W_2(\mu_s,\mu_t)\le\left(\int d^2(F_s,F_t)\, d\mu\right)^{\frac{1}{2}}=(t-s)S.
\end{equation}
Moreover, for $0<\eta'\le\eta$, applying Lemma \ref{lem:unif} to
$\psi:=-\varphi_{-\eta'}$ with
$\epsilon:=2\eta'\le\eta_2\le\epsilon_0$ and
$\sigma:=\mu_{-\eta'}\in\Prob_2(M)$, we obtain that
\[
\big(\Id,\exp(2\eta'\nabla\varphi_{-\eta'})\big)_\#\mu_{-\eta'}
\overset{\eqref{eq:flow}}{=}
\big(\Id,F_{\eta'}\circ F_{-\eta'}^{-1}\big)_\#\big((F_{-\eta'})_\#\mu\big)
=(F_{-\eta'},F_{\eta'})_\#\mu
\]
is optimal. Thus equality holds in \eqref{eq:upper} for
$(s,t)=(-\eta',\eta')$. For arbitrary $-\eta<s\le t<\eta$, pick
$\eta'\in(\max\{|s|,|t|\},\eta)$; then by \eqref{eq:upper} and the triangle
inequality
\[
2\eta'S=W_2(\mu_{-\eta'},\mu_{\eta'})
\le W_2(\mu_{-\eta'},\mu_s)+W_2(\mu_s,\mu_t)+W_2(\mu_t,\mu_{\eta'})
\le 2\eta'S,
\]
forcing equality in each term, in particular
$W_2(\mu_s,\mu_t)=(t-s)S$. Hence $t\mapsto\mu_t$ is a constant speed geodesic
on $(-\eta,\eta)$.
\end{proof}

\begin{remark}
A direct computation from \eqref{eq:firstvar} shows that $(\varphi_r)$ solves
the Hamilton--Jacobi equation
$\partial_r\varphi_r+\tfrac12|\nabla\varphi_r|^2=0$, i.e.\ it is the
evolution of $\varphi$ under the Hopf--Lax semigroup; cf.\
\cite[Theorem 2.18]{MR3050280} and \cite{MR2920736}, and \cite{MR2358290}
for related computations.
\end{remark}

Thus we are finally able to prove the following:

\begin{proposition}\label{InnT}
Let $M$ be a complete, connected Riemannian manifold and
$\mu\in\Prob_2(M)$. Then the assignment
$\nabla\varphi\mapsto e\big(\iota_\mu(\nabla\varphi)\big)$,
$\varphi\in C_c^\infty(M)$, extends to a natural isometry
$\mathrm{Tan}_\mu\Prob_2(M)\ \cong\ \mathrm{InnC}_\mu\Prob_2(M).$

\end{proposition}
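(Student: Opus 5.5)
The plan is to build the isometry on the dense set of gradients, extend it by completeness, and then prove that the image is exactly $\mathrm{InnC}_\mu\Prob_2(M)$. In outline: Proposition~\ref{geodesics} shows that gradient directions are inner directions; Lemma~\ref{lem:grad} shows that the assignment is distance-preserving on gradients; Lemma~\ref{georeg} together with Gigli's identification of $\mathrm{Tan}_\mu$ with $\overline V^{\,L^2_\mu}$ shows that every inner direction is reached.

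\emph{Step 1 (well-definedness and image).} Fix $\varphi\in C_c^\infty(M)$. By Proposition~\ref{geodesics}, $t\mapsto\exp(t\nabla\varphi)_\#\mu=\exp_\mu(t\cdot\iota_\mu(\nabla\varphi))$ is a constant speed geodesic on $(-\eta,\eta)$. Hence $\iota_\mu(\nabla\varphi)\in\DirB_\mu$, and $e(\iota_\mu(\nabla\varphi))$ is a direction in $\mathrm{InnD}_\mu\Prob_2(M)$. If $\nabla\varphi=\nabla\psi$ $\mu$-a.e., then Lemma~\ref{lem:grad} gives $\delta=0$, so the two directions coincide in $\mathrm{D}_\mu\Prob_2(M)$. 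The map is therefore well defined on the subset $\{\nabla\varphi\}\subset L^2_\mu$.

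\emph{Step 2 (isometry and extension).} For the geodesics of Step~1, $\delta$ is defined by a $\limsup$. Lemma~\ref{lem:grad} says this is an honest limit, equal to $\|\nabla\varphi-\nabla\psi\|_{L^2_\mu}$. So the assignment is distance-preserving from a dense subset of $\mathrm{Tan}_\mu\Prob_2(M)$ into the complete space $\mathrm{C}_\mu\Prob_2(M)$. It therefore extends uniquely to an isometric embedding $\Psi:\mathrm{Tan}_\mu\Prob_2(M)\to\mathrm{C}_\mu\Prob_2(M)$. Since the dense set is sent into $\mathrm{InnD}_\mu$, the image of $\Psi$ lies in the closure $\mathrm{InnC}_\mu\Prob_2(M)$. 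The image is also closed, being the isometric image of a complete space.

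\emph{Step 3 (surjectivity; the main point).} It suffices to show that every inner direction lies in the image of $\Psi$, since the image is closed and $\mathrm{InnD}_\mu$ is dense in $\mathrm{InnC}_\mu$.

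Let $(\mu_t)$ be a geodesic starting at $\mu_0=\mu$ that extends to a geodesic $\tilde\mu$ on $(-\epsilon,\epsilon)$. Restrict $\tilde\mu$ to $[-\epsilon/2,\epsilon/2]$ and rescale it to $[0,1]$, so that $\mu$ is an interior point. Lemma~\ref{lem:georeg} then gives the following:
\begin{itemize}
\item the optimal geodesic plan $\Lambda$ is unique;
\item the coupling of $\mu$ with the endpoint $\mu_{\epsilon/2}$ is induced by a map from $\mu$.
\end{itemize}
Consequently $\mu$-a.e.\ $x$ is carried by a unique geodesic of $\Lambda$. Let $v(x)$ be the initial velocity of that geodesic. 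Then $\mu_t=\exp(tv)_\#\mu$, and $v\in L^2_\mu$ with $\|v\|_{L^2_\mu}$ equal to the speed. Restrictions of optimal geodesic plans remain optimal, so $(\Id,\exp(tv))_\#\mu$ is optimal for all $t\in[0,\epsilon/2]$. Thus $v\in V$, and the associated plan is $\iota_\mu(v)$, by uniqueness of the plan attached to a geodesic \cite[Theorem 1.11]{MR2847481}.

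By Gigli's Corollary~6.4, $v\in\mathrm{Tan}_\mu\Prob_2(M)$. Choose $\varphi_n\in C_c^\infty(M)$ with $\nabla\varphi_n\to v$ in $L^2_\mu$. Lemma~\ref{lem:lip} gives
\[
\delta\big(e(\iota_\mu v),e(\iota_\mu\nabla\varphi_n)\big)\le\|v-\nabla\varphi_n\|_{L^2_\mu}\to0,
\]
so the given inner direction equals $\lim_n\Psi(\nabla\varphi_n)=\Psi(v)$.

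I expect Step~3 to be the delicate part. The specific point to verify is that Lemma~\ref{lem:georeg} really yields a map from $\mu$ for the one-sided portion $t\ge0$. This is where backward extendability, i.e.\ $\mu$ being an interior point, is used. It is also exactly why only the inner cone, and not the full cone, is obtained in the noncompact setting. Naturality of $\Psi$ holds because it is determined by its values on gradients.
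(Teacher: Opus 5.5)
Your proof is correct, and your Steps 1--2 match the paper's. The difference is in the surjectivity step.

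\textbf{The paper's route.} It does not invoke Lemma~\ref{lem:georeg} there; it only points to the interior regularity results at this spot. It takes Gigli's plans $\gamma^\pm\in\Prob_2(TM)_\mu$ attached to the forward half and to the reversed half of the two-sided geodesic, and couples them by the product of their disintegrations. It pushes this coupling to a transport plan between $\mu_{-\epsilon'}$ and $\mu_{\epsilon'}$. Equality in the triangle and Minkowski inequalities, together with the fact that minimizing geodesics have no corners, forces $w=-u$ almost everywhere. Hence both disintegrations are Dirac and $\gamma=\iota_\mu(v)$. Membership $v\in\mathrm{Tan}_\mu\Prob_2(M)$ then comes from characterization (iii) of Gigli's Corollary~6.4, via $\iota_\mu(v)\in\DirB_\mu\subset\TanB_\mu\Prob_2(M)$.

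\textbf{Your route.} You delegate the regularity to Lemma~\ref{lem:georeg}, applied to the rescaled two-sided geodesic. You build $v$ as the velocity field of $\Lambda$ at $\mu$. You then check $v\in V$ directly, because restrictions of optimal geodesic plans stay optimal, and so you use characterization (ii).

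\textbf{Comparison.} Your argument is shorter and conceptually transparent: an inner direction is literally a velocity field whose flow is optimal for short time. The paper's argument is self-contained and stays inside Gigli's plan formalism. It uses only an elementary equality-case argument and never needs the cyclical-monotonicity and non-branching machinery behind Lemma~\ref{lem:georeg}.

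\textbf{Two small points.}
\begin{enumerate}
\item The sentence ``consequently $\mu$-a.e.\ $x$ is carried by a unique geodesic of $\Lambda$'' does not follow from uniqueness of $\Lambda$ and the map property alone. Add that the forward piece through $x$ is a minimizing geodesic from $x$ to $T(x)$. It is unique because $x$ is an interior point of a minimizing geodesic of $\Lambda$; equivalently, evaluation at the interior time is injective on $\supp\Lambda$. To get measurability of $v$, note that the push-forward of $\Lambda$ under $\gamma\mapsto(\gamma(\tfrac12),\gamma'(\tfrac12))$ is concentrated on a graph over $\mu$, after adjusting for the time rescaling.
\item Your closing comment is inaccurate: the restriction to the inner cone has nothing to do with non-compactness. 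Even for compact $M$, the full cone $\mathrm{C}_\mu\Prob_2(M)\cong\TanB_\mu\Prob_2(M)$ is in general strictly larger than $\mathrm{Tan}_\mu\Prob_2(M)$. For instance, at a Dirac mass $\delta_x$ one has $\mathrm{Tan}_{\delta_x}\Prob_2(M)\cong T_xM$, while geodesics issuing from $\delta_x$ may split the mass. Backward extendability is exactly what excludes such one-sided directions.
\end{enumerate}
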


\begin{proof}
By Proposition \ref{geodesics} and Lemma \ref{lem:grad}, the composition
\begin{equation}\label{eq:isom}
\{\nabla\varphi:\varphi\in C_c^\infty(M)\}
\ \xrightarrow{\ \iota_\mu\ }\ \DirB_\mu
\ \xrightarrow{\ e\ }\ \mathrm{D}_\mu\Prob_2(M)
\end{equation}
is well defined and isometric, and its image is contained in
$\mathrm{InnD}_\mu\Prob_2(M)$. Since $\mathrm{Tan}_\mu\Prob_2(M)$ is the
$L^2_\mu$-closure of $\{\nabla\varphi\}$ and $\mathrm{C}_\mu\Prob_2(M)$ is
complete, \eqref{eq:isom} extends to an isometric embedding
$\overline e:\mathrm{Tan}_\mu\Prob_2(M)\to\mathrm{C}_\mu\Prob_2(M)$ with
closed image contained in the closure of
$\mathrm{InnD}_\mu\Prob_2(M)$, i.e.\ in $\mathrm{InnC}_\mu\Prob_2(M)$.

Conversely, since the image of $\overline e$ is closed and
$\mathrm{InnC}_\mu\Prob_2(M)$ is the closure of
$\mathrm{InnD}_\mu\Prob_2(M)$, it suffices to show that the image of
$\overline e$ contains the direction represented by any constant speed
geodesic $(\mu_t)_{t\in(-\epsilon,\epsilon)}$ with $\mu_0=\mu$.

Let $\gamma\in\Prob_2(TM)_\mu$ be the plan associated to
$(\mu_t)_{t\in[0,\epsilon)}$ by \cite[Proposition 1.12]{MR2847481}, so that
$\gamma\in\DirB_\mu$, $t\cdot\gamma\in\exp_\mu^{-1}(\mu_t)$ for
$t\in(0,\epsilon)$, and the direction in question is $e(\gamma)$. We claim
that $\gamma=\iota_\mu(v)$ for some $v\in L^2_\mu$ (cf.\ the interior
regularity results in \cite[Section 2.3]{MR3050280}). The case of a constant geodesic being
trivial, we assume that the speed $S$ of $(\mu_t)$ is positive. Set
$\epsilon':=\epsilon/2$, write $\gamma^+:=\gamma$ and let
$\gamma^-\in\Prob_2(TM)_\mu$ be the plan associated to the reversed
geodesic $[0,\epsilon)\ni t\mapsto\mu_{-t}$, so that
$\epsilon'\cdot\gamma^\pm\in\exp_\mu^{-1}(\mu_{\pm\epsilon'})$ and, $\int|w|^2\,d\gamma^\pm=S^2$.

Let $\alpha:=\int_M\gamma^-_x\otimes\gamma^+_x\,d\mu(x)\in\Prob(T^2M)$ be the
admissible coupling of $\gamma^-$ and $\gamma^+$ given by the product of the
disintegrations. As in the proof of Lemma \ref{lem:lip}, the map
$(x,u,w)\mapsto(\exp_x(\epsilon'u),\exp_x(\epsilon'w))$ pushes $\alpha$ to
a transport plan between $\mu_{-\epsilon'}$ and $\mu_{\epsilon'}$, and so,
by the triangle and the Minkowski inequalities,
\[
\begin{aligned}
2\epsilon'S=W_2(\mu_{-\epsilon'},\mu_{\epsilon'})
&\le\Big(\int d^2\big(\exp_x(\epsilon'u),\exp_x(\epsilon'w)\big)\,
d\alpha\Big)^{\frac12}\\
&\le\epsilon'\big\||u|+|w|\big\|_{L^2(\alpha)}
\le2\epsilon'S,
\end{aligned}
\]
where the last step used that $\||u|\|_{L^2(\alpha)}=\||w|\|_{L^2(\alpha)}=S$.
Equality therefore holds throughout. The Minkowski inequality step gives $|u|=|w|$
$\alpha$-a.e.; the triangle inequality step gives
$d(\exp_x(\epsilon'u),\exp_x(\epsilon'w))=\epsilon'|u|+\epsilon'|w|$, and $x$
lies in the interior of a minimizing geodesic joining $\exp_x(\epsilon'u)$
and $\exp_x(\epsilon'w)$. Its two halves are $r\mapsto\exp_x(ru)$ and $r\mapsto\exp_x(rw)$; being
minimizing, hence smooth, this geodesic has no corner at the interior point
$x$, which implies $w=-u$ (trivially if $u=w=0$). Thus $\gamma^-_x\otimes\gamma^+_x$ is concentrated on $\{w=-u\}$
for $\mu$-a.e.\ $x$, forcing both $\gamma^\pm_x$ to be Dirac. Hence
$\gamma=\iota_\mu(v)$ for some $v\in L^2_\mu$.

In particular $\iota_\mu(v)=\gamma\in\DirB_\mu\subset\TanB_\mu\Prob_2(M)$,
so $v\in\mathrm{Tan}_\mu\Prob_2(M)$ by \cite[Corollary 6.4]{MR2847481}, and there are
$\varphi_n\in C^\infty_c(M)$ with $\nabla\varphi_n\to v$ in $L^2_\mu$. By
Lemma \ref{lem:lip} and since $\iota_\mu$ is isometric,
\[
\delta\Big(e(\gamma),\,e\big(\iota_\mu(\nabla\varphi_n)\big)\Big)
\le W_\mu\big(\iota_\mu(v),\iota_\mu(\nabla\varphi_n)\big)
=\|v-\nabla\varphi_n\|_{L^2_\mu}\to 0,
\]
hence $e(\gamma)$ belongs to the closed image of $\overline e$. As $\gamma$
was chosen arbitrarily,
\[
\mathrm{InnD}_\mu\Prob_2(M)\subset\overline e(\mathrm{Tan}_\mu\Prob_2(M)),
\]
and so taking closures we obtain
$\mathrm{InnC}_\mu\Prob_2(M)=\overline e(\mathrm{Tan}_\mu\Prob_2(M))$.
\end{proof}

\begin{remark}
The space $\mathrm{Tan}_\mu\Prob_2(M)=\overline{\{\nabla\varphi:\varphi\in
C_c^\infty(M)\}}^{\,L^2_\mu}$ is the tangent space underlying Otto's formal
Riemannian calculus on $\Prob_2(M)$ \cite{Otto}, where a tangent vector at
$\mu$ is a velocity potential $\varphi$ acting through the continuity equation
and the metric tensor is $\int_M\langle\nabla\varphi_1,\nabla\varphi_2\rangle\,
d\mu$. Proposition~\ref{InnT} thus describes this tangent space \emph{purely
metrically}, as the inner tangent cone $\mathrm{InnC}_\mu\Prob_2(M)$,
reconstructed from the infinite dimensional metric space $(\Prob_2(M),W_2)$ with no reference to gradients or the continuity equation.
\end{remark}

For a geodesic metric space $X$, the \emph{Euclidean rank}
$\operatorname{rank}(X)$ is the supremum of all $n\in\N$ for which there is an
isometric embedding $\mathbb E^n\hookrightarrow X$.

\begin{corollary}\label{dim}
Let $\mu\in\Prob_2(M)$. Then
\[
\operatorname{rank}\big(\mathrm{InnC}_\mu\Prob_2(M)\big)=\dim(M)\cdot|\supp\mu|.
\]
\end{corollary}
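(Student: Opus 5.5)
By Proposition~\ref{InnT}, $\mathrm{InnC}_\mu\Prob_2(M)$ is isometric to the Hilbert space $\mathrm{Tan}_\mu\Prob_2(M)\subset L^2_\mu$. So the plan is to compute the rank of a closed subspace $H$ of a Hilbert space, and then identify $\dim \mathrm{Tan}_\mu\Prob_2(M)$ with $\dim(M)\cdot|\supp\mu|$.

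The first step is to show that $\operatorname{rank}(H)=\dim H$ for any real Hilbert space $H$, where $\dim H=\infty$ for infinite-dimensional spaces. If $\dim H\ge n$, an $n$-dimensional linear subspace is an isometric copy of $\mathbb E^n$, so $\operatorname{rank}(H)\ge\dim H$. For the reverse inequality, take an isometric embedding $f:\mathbb E^n\to H$. Hilbert spaces are strictly convex, so $f$ maps midpoints to midpoints, and by continuity $f$ is affine (Mazur--Ulam style). Hence $f(\mathbb E^n)$ is an $n$-dimensional affine subspace, and its translate through $0$ is an $n$-dimensional linear subspace of $H$. Therefore $n\le\dim H$.

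The second step is to show $\dim\mathrm{Tan}_\mu\Prob_2(M)=\dim(M)\cdot|\supp\mu|$. Write $m:=\dim M$.

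\emph{Finite support.} Let $\supp\mu=\{x_1,\dots,x_n\}$. Then $L^2_\mu\cong T_{x_1}M\times\cdots\times T_{x_n}M$, which has dimension $mn$. Every element of this space is realized as $\nabla\varphi$ for some $\varphi\in C_c^\infty(M)$: choose $\varphi$ as a sum of bump functions, each linear in normal coordinates near one $x_i$ and supported in disjoint small balls. Hence $\mathrm{Tan}_\mu\Prob_2(M)=L^2_\mu$ has dimension $mn$. This agrees with the motivating computation at the start of the section.

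\emph{Infinite support.} Here the goal is to show that $\mathrm{Tan}_\mu\Prob_2(M)$ is infinite-dimensional. For any $N$, pick $N$ distinct points $y_1,\dots,y_N\in\supp\mu$ and pairwise disjoint open balls $B_j\ni y_j$. Each ball has $\mu(B_j)>0$ because $y_j$ lies in the support. Choose $\varphi_j\in C_c^\infty(B_j)$ with $\nabla\varphi_j\neq0$ somewhere in $B_j$. The main obstacle is ensuring $\|\nabla\varphi_j\|_{L^2_\mu}>0$, since $\mu$ might charge only a set on which a given gradient happens to vanish. To handle this, take $\varphi_j=\chi_j\cdot\ell_j$, where $\chi_j\equiv1$ near $y_j$ and $\ell_j$ is a coordinate function in a chart around $y_j$. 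Then $\nabla\varphi_j\neq0$ on a neighbourhood $U_j\ni y_j$, and $\mu(U_j)>0$. The gradients $\nabla\varphi_j$ have pairwise disjoint supports, so they are mutually orthogonal nonzero vectors in $L^2_\mu$. Hence $\dim\mathrm{Tan}_\mu\ge N$ for every $N$, so the dimension is infinite.

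Combining the two steps gives $\operatorname{rank}=\dim(M)\cdot|\supp\mu|$, including the case where both sides equal $\infty$.
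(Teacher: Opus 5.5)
Your proposal is correct and follows the paper's proof. You identify $\mathrm{InnC}_\mu\Prob_2(M)$ with the Hilbert space $\mathrm{Tan}_\mu\Prob_2(M)$ via Proposition~\ref{InnT}, use that the Euclidean rank of a Hilbert space equals its dimension, and count dimensions with localized bump-function gradients at points of $\supp\mu$. Beyond that, you only supply details the paper leaves implicit: the midpoint (Mazur--Ulam) argument for $\operatorname{rank}=\dim$, and, for infinite support, choosing each $\nabla\varphi_j$ nonvanishing on a neighbourhood of $y_j$ so that it has positive $L^2_\mu$-norm.
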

\begin{proof}
By Proposition~\ref{InnT}, $\mathrm{InnC}_\mu\Prob_2(M)$ is isometric to
$\mathrm{Tan}_\mu\Prob_2(M)$, a closed subspace of $L^2_\mu$ and hence a Hilbert
space, whose Euclidean rank equals its dimension. Gradients of bump functions
realize arbitrary tangent vectors at the distinct points of $\supp\mu$, so the
dimension is $\dim(M)\cdot|\supp\mu|$. Hence
$\operatorname{rank}\big(\mathrm{InnC}_\mu\Prob_2(M)\big)=\dim(M)\cdot|\supp\mu|.$
\end{proof}


\section{Preservation of discrete measures}

For coefficients $a_1,\dots,a_n\in[0,1]$ with $\sum_{i=1}^n a_i=1$, let
$\Delta(a_1,\dots,a_n)\subset\Prob_2(M)$ denote the set of measures
$\sum_{i=1}^n a_i\delta_{x_i}$, $x_i\in M$. The set of \emph{balanced
combinations} is $\Delta_n:=\Delta(\tfrac1n,\dots,\tfrac1n)$, and the set of
general $n$-point convex combinations is
\[
\tilde\Delta_n:=\bigcup_{\substack{a_1,\dots,a_n\in[0,1]\\ \sum_i a_i=1}}
\Delta(a_1,\dots,a_n)=\{\mu\in\Prob_2(M):|\supp\mu|\le n\}.
\]
We show that if $M$ is not isometric to the real line, every isometry of $\Prob_2(M)$ preserves $\Delta_n$. We first treat the general
convex combinations, where no assumption on $M$ is required.

\begin{theorem}\label{wp}
Let $\Phi:\Prob_2(M)\to\Prob_2(N)$ be an isometry. Then
$\dim(M)=\dim(N)$ and $\Phi(\tilde\Delta_n(M))=\tilde\Delta_n(N)$ for all
$n\in\N$.
\end{theorem}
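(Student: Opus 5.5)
The plan is to use Corollary~\ref{dim}, which says that the number $\operatorname{rank}(\mathrm{InnC}_\mu\Prob_2(M))=\dim(M)\cdot|\supp\mu|$ is a purely metric invariant of the pointed space $(\Prob_2(M),\mu)$. First I check that the inner tangent cone is preserved by isometries. An isometry $\Phi$ maps constant speed geodesics issuing from $\mu$ to constant speed geodesics issuing from $\Phi(\mu)$ with the same speed. It preserves $\delta$, since $\delta$ is defined through distances only, so it induces an isometry $\mathrm{D}_\mu\Prob_2(M)\to\mathrm{D}_{\Phi(\mu)}\Prob_2(N)$. A geodesic that extends locally backward past $\mu$ is mapped to one that extends backward past $\Phi(\mu)$, and the same holds for $\Phi^{-1}$. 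Hence $\mathrm{InnD}$ is mapped onto $\mathrm{InnD}$, and passing to completions gives $\mathrm{InnC}_\mu\Prob_2(M)\cong\mathrm{InnC}_{\Phi(\mu)}\Prob_2(N)$. Consequently, for every $\mu$,
\[
\dim(M)\cdot|\supp\mu| = \dim(N)\cdot|\supp\Phi(\mu)|,
\]
as an identity in $\N\cup\{\infty\}$.

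Next I compare dimensions by applying this identity to a Dirac mass. Take $\mu=\delta_x$, so that $\dim M=\dim N\cdot|\supp\Phi(\delta_x)|$. In the same way, $\dim N=\dim M\cdot|\supp\Phi^{-1}(\delta_y)|$ for any $y$. Both dimensions are finite and positive; the zero-dimensional case is trivial, since then the space is a point. From the first identity, $\dim N$ divides $\dim M$, so $\dim N\le\dim M$. From the second, $\dim M\le\dim N$. Therefore $\dim M=\dim N$. This is the main subtle point: one must rule out, for example, $\Phi(\delta_x)$ having two support points when $\dim M=2\dim N$. The symmetric use of $\Phi^{-1}$ together with divisibility settles this without needing a separate argument that Diracs go to Diracs. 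That fact now follows anyway, since $|\supp\Phi(\delta_x)|=1$.

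With $d:=\dim M=\dim N\ge1$, the identity becomes $|\supp\Phi(\mu)|=|\supp\mu|$ for every $\mu$, including the infinite case. Hence $\Phi(\tilde\Delta_n(M))\subset\tilde\Delta_n(N)$, and applying the same reasoning to $\Phi^{-1}$ gives equality. The only step I would check carefully is the first one: that the correspondence of inner directions really extends to an isometry of the completions inside the tangent cones. This is immediate, because $\Phi$ induces an isometry $\mathrm{C}_\mu\to\mathrm{C}_{\Phi(\mu)}$ that maps $\mathrm{InnD}$ onto $\mathrm{InnD}$, and therefore maps closures onto closures. Euclidean rank is invariant under isometry, which completes the argument.
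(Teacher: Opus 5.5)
Your proposal is correct and follows essentially the same route as the paper: you transfer the inner tangent cone via $\Phi$, apply Corollary~\ref{dim}, and cancel the common dimension. Your Dirac-mass divisibility argument, applied to both $\Phi$ and $\Phi^{-1}$, is an explicit version of the paper's step of taking infima over $\mu$ on both sides, which likewise relies on the surjectivity of $\Phi$.
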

\begin{proof}
An isometry maps geodesics issuing from $\mu$ to geodesics issuing
from $\Phi(\mu)$, preserving $\delta$ and two-sided extendability; it
therefore induces an isometry
$\mathrm{InnC}_\mu\Prob_2(M)\cong\mathrm{InnC}_{\Phi(\mu)}\Prob_2(N)$, and
so in particular the Euclidean rank of the inner tangent cones is preserved. Thus by Corollary~\ref{dim},
\[
\dim(M)\cdot|\supp\mu|=\dim(N)\cdot|\supp\Phi(\mu)|\qquad
\text{for all }\mu\in\Prob_2(M).
\]
Since $\inf_\mu|\supp\mu|=\inf_\mu|\supp\Phi(\mu)|=1$, taking infima of both sides gives $\dim(M)=\dim(N)$; cancelling this
common factor then yields $|\supp\mu|=|\supp\Phi(\mu)|$ for all $\mu$, i.e.\
$\Phi(\tilde\Delta_n(M))=\tilde\Delta_n(N)$ for all $n$.
\end{proof}

For $\mu\in\Prob_2(M)$ and $n\in\N$ set
\[
\mathrm{C}_n(\mu):=\big\{\nu\in\Prob_2(M):\ \text{some geodesic in }
\tilde\Delta_n\text{ joins }\mu\text{ to }\nu\big\}.
\]
By the interior regularity of geodesics
(cf. \cite[Proposition 2.16]{MR3050280}), $\mathrm{C}_n(\mu)=\emptyset$ when
$|\supp\mu|>n$, and if \(\mu=\sum_{i=1}^n a_i\delta_{x_i}\) has exactly \(n\) distinct atoms, then
\(\mathrm C_n(\mu)\subset \Delta(a_1,\dots,a_n)\).

A subset $A\subset B$ of a metric space is \emph{weakly convex relative to
$B$} if any two points of $A$ are joined by a geodesic contained in $B$.

\begin{lemma}\label{convex}
Let $M$ be a complete connected Riemannian manifold with $\dim M\ge2$ and
$\mu=\sum_{i=1}^n a_i\delta_{x_i}\in\tilde\Delta_n(M)\setminus
\tilde\Delta_{n-1}(M)$. Then $\mathrm{C}_n(\mu)$ is weakly convex
relative to $\tilde\Delta_n(M)$ if and only if
$a_1=\dots=a_n=\tfrac1n$.
\end{lemma}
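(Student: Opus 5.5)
We first describe $\mathrm C_n(\mu)$ explicitly, then prove each direction. Let $\nu\in\mathrm C_n(\mu)$, joined to $\mu$ by a geodesic $(\mu_t)$ in $\tilde\Delta_n$. By Lemma~\ref{lem:georeg} and the remark following it, an interior point $\mu_s$ with $m\le n$ atoms forces $\mu_t=\sum_j b_j\delta_{\gamma_j(t)}$ with $m$ trajectories. Since $\mu_0=\mu$ has exactly $n$ atoms, we get $m=n$ and one trajectory per atom. Hence
\[
\mathrm C_n(\mu)=\Big\{\textstyle\sum_i a_i\delta_{y_i}:\ \text{the coupling } \textstyle\sum_i a_i\delta_{(x_i,y_i)} \text{ is optimal}\Big\}.
\]
The inclusion $\supseteq$ holds by displacement interpolation along minimizing geodesics $x_i\to y_i$. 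Its interior points have at most $n$ atoms, since they are images of $n$ geodesics.

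More generally, let $\nu=\sum_i a_i\delta_{y_i}$ and $\nu'=\sum_i a_i\delta_{y'_i}$, and let $\pi$ be an optimal plan between them. By the same remark, the displacement interpolation has exactly one atom per point of $\supp\pi$ at every interior time, because trajectories cannot collide in the interior. So a geodesic between them stays in $\tilde\Delta_n$ if and only if $|\supp\pi|\le n$ for the underlying optimal $\pi$. The key combinatorial fact is this: a coupling of two $n$-point measures whose support has at most $n$ edges must be a perfect matching. It therefore has the form $y_i\mapsto y'_{\sigma(i)}$ with $a_{\sigma(i)}=a_i$, which is a weight-preserving permutation.

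\emph{If.} Suppose all $a_i=\tfrac1n$. Then for any $\nu,\nu'\in\Delta_n$, the optimal transport problem is a linear program over doubly stochastic matrices. By Birkhoff's theorem it has an optimal vertex, i.e.\ a permutation. The displacement interpolation of this permutation plan lies in $\tilde\Delta_n$. So $\mathrm C_n(\mu)\subset\Delta_n$ is weakly convex relative to $\tilde\Delta_n$.

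\emph{Only if.} Suppose, after relabelling, that $a_1\neq a_2$. We want $\nu,\nu'\in\mathrm C_n(\mu)$ with the same support but with the weights at two atoms swapped. Concretely, we aim for
\[
\nu=a_1\delta_p+a_2\delta_q+\textstyle\sum_{k\ge3}a_k\delta_{x_k},\qquad \nu'=a_2\delta_p+a_1\delta_q+\textstyle\sum_{k\ge3}a_k\delta_{x_k}.
\]
Both must be optimal targets from $\mu$, which requires $d(x_1,p)=d(x_2,p)$ and $d(x_1,q)=d(x_2,q)$. Here $\dim M\ge2$ is used: take $m$ the midpoint of a minimizing geodesic from $x_1$ to $x_2$. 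The bisector $\{z:d(x_1,z)=d(x_2,z)\}$ near $m$ contains points $p\neq q$ arbitrarily close to $m$ and to each other, for instance by the implicit function theorem applied to $d(x_1,\cdot)-d(x_2,\cdot)$. We then check optimality of both couplings via $c$-cyclical monotonicity (Theorem~\ref{thm:opt}). A cycle involving only $x_1,x_2$ has equal cost by construction. Cycles involving some $x_k$, $k\ge3$, which stays put at zero cost, need inequalities of the type $d^2(x_1,p)\le d^2(x_1,x_k)+d^2(x_k,p)$.

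This optimality check is the step I expect to be the main obstacle. The difficulty is that $x_k$ may be as close to $m$ as $x_1$ is. My first attempt will be to use the triangle inequality together with $d(x_i,p)\approx d(x_1,x_2)/2$. If that is not tight enough, I will choose the pair $\{x_1,x_2\}$ among pairs with unequal weights so as to minimise $d(x_1,x_2)$. As a last resort I will also move the other atoms slightly toward $m$ in both $\nu$ and $\nu'$, and certify optimality with an explicit Kantorovich potential instead.

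Finally, compare $\nu$ and $\nu'$. The plan that fixes every atom and moves only mass $|a_1-a_2|$ from $p$ to $q$ (or back) has cost $\tfrac12|a_1-a_2|\,d^2(p,q)$. Any weight-preserving permutation plan differs from the identity, since $a_1\ne a_2$ forbids the identity. It must therefore send some atom to a different atom. Its cost is at least roughly $\min(a_i)\cdot\tfrac12\min\{d^2(p,q)\ \text{with coefficient}\ \ge a_1+a_2,\ \text{distances to other atoms}\}$, which is strictly larger once $d(p,q)$ is small compared with the distances from $\{p,q\}$ to the $x_k$. Hence no optimal plan between $\nu$ and $\nu'$ is a matching. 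Every geodesic between them then has interior points with at least $n+1$ atoms, so $\mathrm C_n(\mu)$ is not weakly convex relative to $\tilde\Delta_n$.
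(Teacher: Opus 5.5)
Your ``if'' direction and your description of $\mathrm C_n(\mu)$ agree with the paper. For the converse you use a genuinely different pair of test measures. The paper moves $x_i,x_j$ to a cross-shaped configuration $y_i,y_i',y_j,y_j'$ around the midpoint $m$, and needs a second-order normal-coordinate expansion to see that a splitting plan beats the identity matching. You instead place $p\neq q$ on the bisector $\{d(x_1,\cdot)=d(x_2,\cdot)\}$ and swap the two weights. Near $m$ this bisector is a smooth hypersurface, since $m\notin\operatorname{Cut}(x_1)\cup\operatorname{Cut}(x_2)$, and it has positive dimension exactly because $\dim M\ge2$. With this choice the comparison between $\nu$ and $\nu'$ is exact. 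The splitting plan costs $|a_1-a_2|\,d^2(p,q)$. The only weight-preserving matching fixing every $x_k$ is the swap $p\leftrightarrow q$, with cost $(a_1+a_2)\,d^2(p,q)$. Every other matching costs at least a constant independent of $p,q$. That endgame is correct and cleaner than the paper's.

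However, the step you flag is left open, and your first attempt at it cannot work for an arbitrary unequal-weight pair. In $\mathbb{E}^2$ take $x_1=(-1,0)$, $x_2=(1,0)$, $x_3=(\tfrac12,0)$. For every $q=(0,s)$ on the bisector, $d^2(x_2,q)=1+s^2>\tfrac12+s^2=d^2(x_2,x_3)+d^2(x_3,q)$, so the diagonal plan is never $c$-cyclically monotone.

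Choosing $\{x_1,x_2\}$ to minimise $d$ among unequal-weight pairs is the right fix. But the triangle inequality only gives $d(m,x_k)\ge D/2$ for $k\ge3$, and you need strict inequality. The missing argument, which is the paper's, handles the equality case. Suppose $d(m,x_k)=D/2$ and, say, $a_k\neq a_1$. Then $D\le d(x_1,x_k)\le d(x_1,m)+d(m,x_k)=D$. So the chosen segment from $x_1$ to $m$, followed by a minimizing segment from $m$ to $x_k$, is minimizing and hence an unbroken geodesic. It continues the chosen segment past $m$ for length $D/2$, forcing $x_k=x_2$, a contradiction.

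This gives $d(m,x_k)\ge D/2+2\delta$ for all $k\ge3$. Take $p,q$ within $\delta$ of $m$. Then $x_1$ and $x_2$ are equidistant from $p$ (and from $q$) and strictly nearer to them than every $x_k$, $k\ge3$. Now let $\pi$ be any plan from $\mu$ to $\nu$, and let $b_l$ denote the weights of $\nu$. The bound $\sum_{k,l}\pi_{kl}\,d^2(x_k,y_l)\ge\sum_l b_l\min_k d^2(x_k,y_l)$ shows that both of your diagonal plans are optimal at once. No cycle-by-cycle check and no ``last resort'' are needed. The same strict inequality also keeps $p,q$ uniformly away from the $x_k$, which your lower bound for the remaining matchings tacitly uses.
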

\begin{proof}
If $a_1=\dots=a_n=\tfrac1n$ then $\mathrm{C}_n(\mu)=\Delta_n$, which is weakly
convex: two balanced measures are joined by an optimal-matching
interpolation, which stays in $\Delta_n\subset\tilde\Delta_n$.

Conversely, assume the $a_i$ are not all equal. Choose a pair $(i,j)$
attaining $\min\{d(x_k,x_l):a_k\neq a_l\}$ (which exists, as not all
$a_i$ are equal), set $D:=d(x_i,x_j)>0$, fix a minimizing geodesic from
$x_i$ to $x_j$ and let $m$ be its midpoint. Let $u\in T_mM$ be the unit vector
with $x_i=\exp_m(\tfrac D2 u)$ and $x_j=\exp_m(-\tfrac D2 u)$. Being an
interior point of a minimizing geodesic, $m$ lies in the cut locus of neither
$x_i$ nor $x_j$; hence $f_i:=d(\cdot,x_i)$ and $f_j:=d(\cdot,x_j)$ are smooth
near $m$, with $\nabla f_i(m)=-u$ and $\nabla f_j(m)=u$.

We first claim that $d(m,x_k)>D/2$ for every $k\neq i,j$. Suppose to the
contrary that $d(m,x_k)\leq D/2$. As $a_i\neq a_j$, either $a_k\neq a_i$ or
$a_k\neq a_j$; say $a_k\neq a_i$. Then minimality of $D$ gives
\[
D\le d(x_i,x_k)\le d(x_i,m)+d(m,x_k)\le\tfrac D2+\tfrac D2=D,
\]
so all inequalities are equalities: $d(m,x_k)=D/2$ and $m$ lies on a
minimizing geodesic from $x_i$ to $x_k$. Its first half is a minimizing
geodesic from $x_i$ to $m$, hence equals
the chosen geodesic as $m\notin\operatorname{Cut}(x_i)$. The geodesic from $x_i$ to $x_k$ thus prolongs the one
from $x_i$ to $x_j$. Since geodesics in Riemannian manifolds do not branch, this forces $x_k=x_j$, a contradiction. By finiteness there
is therefore $\delta>0$ with $d(m,x_k)\ge D/2+2\delta$ for all $k\neq i,j$.

Since $\dim M\ge2$, we can choose a unit vector $b\in T_mM$ with $b\perp u$. For clarity, set
$\alpha:=\tfrac12$, and for $\epsilon>0$ set $y_k=y_k'=x_k$ for $k\neq i,j$ and
\[
\begin{aligned}
y_i&=\exp_m\!\big(\epsilon(\alpha u+b)\big), &\qquad
y_i'&=\exp_m\!\big(\epsilon(\alpha u-b)\big),\\
y_j&=\exp_m\!\big(\epsilon(-\alpha u-b)\big), &\qquad
y_j'&=\exp_m\!\big(\epsilon(-\alpha u+b)\big),
\end{aligned}
\]
and $\nu:=\sum_k a_k\delta_{y_k}$, $\nu':=\sum_k a_k\delta_{y_k'}$.

We first claim that \emph{$\nu,\nu'\in\mathrm{C}_n(\mu)$.} Indeed by the first-order expansion of the
smooth functions $f_i,f_j$ at $m$,
\[
d(y_i,x_i)=\tfrac D2-\alpha\epsilon+O(\epsilon^2),\qquad
d(y_i,x_j)=\tfrac D2+\alpha\epsilon+O(\epsilon^2),
\]
so $d(y_i,x_i)<d(y_i,x_j)$ for small $\epsilon$; and since
$d(m,y_i)\le\epsilon\sqrt{\alpha^2+1}<\delta$ for small $\epsilon$, for every
$k\neq i,j$
\[
d(y_i,x_k)\ge d(m,x_k)-d(m,y_i)>\tfrac D2+\delta>d(y_i,x_i).
\]
Hence $x_i$ is the nearest atom to $y_i$; the same holds for $y_i'$, and
symmetrically $x_j$ is nearest to $y_j,y_j'$, while $y_k=x_k$ for $k\neq i,j$.
In other words, $d(y_l,x_l)\le d(y_l,x_k)$ and $d(y_l',x_l)\le d(y_l',x_k)$ for all
$k,l$, and for any transport plan $\pi=(\pi_{kl})$ from $\mu$ to $\nu$ (i.e.\
$\sum_l\pi_{kl}=a_k$, $\sum_k\pi_{kl}=a_l$),
\[
\sum_{k,l}\pi_{kl}\,d(x_k,y_l)^2\ \ge\ \sum_{k,l}\pi_{kl}\,d(x_l,y_l)^2
=\sum_l a_l\,d(x_l,y_l)^2 .
\]
Thus the diagonal plan $\tau=\sum_k a_k\delta_{(x_k,y_k)}$ is optimal and its
displacement interpolation is a geodesic in $\tilde\Delta_n$ from $\mu$ to
$\nu$; hence $\nu\in\mathrm{C}_n(\mu)$, and likewise $\nu'\in\mathrm{C}_n(\mu)$.

Next we show that \emph{$\nu,\nu'$ are not joined within $\tilde\Delta_n$}: in normal
coordinates at $m$ one has
$d(\exp_m(\epsilon p),\exp_m(\epsilon q))^2=\epsilon^2|p-q|^2+O(\epsilon^4)$,
uniformly for $p,q$ ranging over the four vectors above. Since
$|(\alpha u+b)-(-\alpha u+b)|^2=4\alpha^2$ and
$|(\alpha u+b)-(\alpha u-b)|^2=4$, we obtain for small $\epsilon$:
\begin{equation}\label{eq:swap}
d(y_i,y_j')^2+d(y_j,y_i')^2=8\alpha^2\epsilon^2+O(\epsilon^4)
\ <\ 8\epsilon^2+O(\epsilon^4)=d(y_i,y_i')^2+d(y_j,y_j')^2.
\end{equation}

Suppose a geodesic from $\nu$ to $\nu'$ stayed in $\tilde\Delta_n$. In that case no atom
can split so the optimal coupling is induced by a weight-preserving bijection of
the atoms of $\nu$ onto those of $\nu'$. Since $a_i\neq a_j$ forbids
$y_i\mapsto y_j'$ and $y_j\mapsto y_i'$, any bijection other than the identity ($y_i\mapsto y_i'$, $y_j\mapsto y_j'$, $x_k\mapsto x_k$) sends a near
atom to some $x_k$ or permutes the $x_k$ hence costs $\geq c>0$ for some constant $c$ independent of
$\epsilon$; and so only the identity could be optimal.
But, taking $a_i<a_j$ (the other case being symmetric), the splitting coupling
\[
a_i\delta_{(y_i,y_j')}+a_i\delta_{(y_j,y_i')}+(a_j-a_i)\delta_{(y_j,y_j')}
+\sum_{k\neq i,j}a_k\delta_{(x_k,x_k)},
\]
is cheaper than the identity since by \eqref{eq:swap},
\[
a_i\big(d(y_i,y_i')^2+d(y_j,y_j')^2-d(y_i,y_j')^2-d(y_j,y_i')^2\big)
=6a_i\epsilon^2+O(\epsilon^4)>0.
\]
 Hence no bijection is optimal, so $\nu,\nu'$ are not joined
within $\tilde\Delta_n$ and $\mathrm{C}_n(\mu)$ is not weakly convex.
\end{proof}

\begin{theorem}\label{balanced}
Let $\Phi:\Prob_2(M)\to\Prob_2(M)$ be an isometry, and suppose
$M$ is not isometric to the real line. Then $\Phi(\Delta_n(M))=\Delta_n(M)$
for all $n\in\N$.
\end{theorem}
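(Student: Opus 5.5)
We prove the statement by splitting according to $\dim M$, and in the main case we characterise $\Delta_n$ by a purely metric property that $\Phi$ must preserve.

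\textbf{Case $\dim M\ge 2$.} The characterisation is Lemma~\ref{convex}, and the plan is to show that $\Phi$ preserves every ingredient of it.

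First, $\Phi(\tilde\Delta_n)=\tilde\Delta_n$ for every $n$ by Theorem~\ref{wp}. Consequently $\Phi$ also maps $\tilde\Delta_n\setminus\tilde\Delta_{n-1}$ onto itself.

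Next, isometries map geodesics to geodesics. A geodesic lies in $\tilde\Delta_n$ if and only if its image does. Hence $\Phi(\mathrm C_n(\mu))=\mathrm C_n(\Phi(\mu))$ for every $\mu$.

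Likewise, weak convexity of a set $A\subset\tilde\Delta_n$ relative to $\tilde\Delta_n$ is expressed only through geodesics and the set $\tilde\Delta_n$. It is therefore preserved: $A$ is weakly convex relative to $\tilde\Delta_n$ if and only if $\Phi(A)$ is.

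Now apply Lemma~\ref{convex}. A measure $\mu\in\tilde\Delta_n\setminus\tilde\Delta_{n-1}$ has equal weights exactly when $\mathrm C_n(\mu)$ is weakly convex relative to $\tilde\Delta_n$. By the previous two steps, this happens exactly when the same holds for $\Phi(\mu)$. Thus $\Phi$ maps $\Delta_n\setminus\tilde\Delta_{n-1}$ onto itself.

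It remains to handle degenerate balanced measures, those with fewer than $n$ distinct atoms. For these we use density and continuity. Measures in $\Delta_n$ with $n$ distinct atoms are $W_2$-dense in $\Delta_n$, since atoms can be perturbed slightly. Moreover $\Delta_n$ is closed in $\Prob_2(M)$: a $W_2$-limit of balanced $n$-point measures is again one, by compactness of the atom configurations for bounded moments. Since $\Phi$ and $\Phi^{-1}$ are continuous, we obtain
\[
\Phi(\Delta_n)=\Phi\big(\overline{\Delta_n\setminus\tilde\Delta_{n-1}}\big)=\overline{\Delta_n\setminus\tilde\Delta_{n-1}}=\Delta_n .
\]
Surjectivity follows by applying the same argument to $\Phi^{-1}$.

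\textbf{Case $\dim M=1$.} Then $M$ is isometric either to $\mathbb E$ or to a circle. The line is excluded by hypothesis. For a circle, $\mathscr P_2(S^1)$ is isometrically rigid by \cite{MR4516798}, so $\Phi=\gamma_\#$ for some isometry $\gamma$ of $S^1$. Push-forward by an isometry clearly preserves $\Delta_n$.

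\textbf{Main obstacle.} All of the content is in Lemma~\ref{convex}; given it, the argument above is bookkeeping. The step needing the most care is the closure claim used for degenerate measures, namely that $\Delta_n$ is closed in $W_2$. This should follow from tightness: a $W_2$-convergent sequence in $\Delta_n$ has its atoms in a bounded set, so along a subsequence each atom converges and the limit is $\frac1n\sum\delta_{x_i}$ with possibly repeated $x_i$. An alternative that avoids closedness is to note that degenerate balanced measures are limits of nondegenerate ones and argue directly via continuity of $\Phi$.
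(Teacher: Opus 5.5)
Your proposal is correct and follows the paper's proof step for step: the circle case goes through the rigidity of $\mathscr P_2(S^1)$, and for $\dim M\ge 2$ you combine Theorem~\ref{wp}, the invariance of $\mathrm C_n(\mu)$ and of relative weak convexity, Lemma~\ref{convex}, and a final density argument. Your explicit check that $\Delta_n$ is $W_2$-closed (bounded second moments plus Hopf--Rinow compactness of the atom configurations) spells out what the paper compresses into ``the claim follows by density.''
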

\begin{proof}
\emph{Case $\dim M=1$:} A complete connected $1$-dimensional Riemannian
manifold not isometric to $\R$ is a rescaled circle $S^1$. By
\cite[Theorem 4.2]{MR4516798},
$(\Prob_2(S^1),W_2)$ is isometrically rigid, so $\Phi=\varphi_\#$ for
some $\varphi\in\operatorname{Isom}(M)$, and as a consequence $\Phi(\Delta_n(M))=\Delta_n(M)$.

\emph{Case $\dim M\ge2$:} By Theorem~\ref{wp}, $\Phi$ is an
isometry with $\Phi(\tilde\Delta_k(M))=\tilde\Delta_k(M)$ for every
$k\in\N$. Fix $n$ and a measure
$\mu\in\tilde\Delta_n(M)\setminus\tilde\Delta_{n-1}(M)$, i.e.\ with exactly
$n$ distinct atoms. As an isometry preserving $\tilde\Delta_n(M)$, $\Phi$
carries geodesics of $\tilde\Delta_n(M)$ issuing from $\mu$ to geodesics of
$\tilde\Delta_n(M)$ issuing from $\Phi(\mu)$; hence
$\Phi(\mathrm{C}_n(\mu))=\mathrm{C}_n(\Phi(\mu))$, and $\mathrm{C}_n(\mu)$
is weakly convex relative to $\tilde\Delta_n(M)$ if and only if
$\mathrm{C}_n(\Phi(\mu))$ is. Since $\Phi$ also preserves
$\tilde\Delta_{n-1}(M)$, the measure $\Phi(\mu)$ likewise has exactly $n$
distinct atoms, so Lemma~\ref{convex} applies to both $\mu$ and $\Phi(\mu)$
and yields
\begin{align*}
	\mu\in\Delta_n(M)\ &\Longleftrightarrow\ \mathrm{C}_n(\mu)\text{ weakly convex}
\\ \ &\Longleftrightarrow\ \mathrm{C}_n(\Phi(\mu))\text{ weakly convex}
\ \Longleftrightarrow\ \Phi(\mu)\in\Delta_n(M).
\end{align*}

Thus $\Phi$ restricts to a bijection of
$\Delta_n(M)\setminus\tilde\Delta_{n-1}(M)$ onto itself. This set is dense in
$\Delta_n(M)$, and therefore the claim follows by density.\end{proof}

The \emph{exotic isometries} of $\Prob_2(\mathbb E)$ constructed in
\cite{MR2731158} break the balanced
combinations $\Delta_n$ while preserving the general $n$-point combinations
$\tilde\Delta_n$; Theorem~\ref{balanced} shows that this mechanism is
unavailable as soon as $M\not\cong\mathbb E$.

\section{Orbifolds and the proofs of Theorems A and C}
Theorem~\ref{B} is a direct corollary of Theorem~\ref{wp}:

\begin{proof}[Proof of Theorem~\ref{B}]
Since any isometry $\varphi:M\to N$ induces an isometry $\varphi_\#$ between the Wasserstein spaces, one direction is clear.

For the other direction, let $\Phi:(\mathscr P_2(M),W_2)\to (\mathscr P_2(N),W_2)$ be an isometry between the Wasserstein spaces over Riemannian manifolds $M$ and $N$ respectively. By Theorem~\ref{wp}, the dimensions of these manifolds are equal and the isometry restricts to an isometry between $\Delta_1(M)\cong M$ and $\Delta_1(N)\cong N$, proving the other direction.
\end{proof}

The proof of the main theorem relies on Thurston'ï¿½ï¿½s notion of Riemannian orbifolds and Lytchak's metric characterization of these, as developed by Lange in \cite{MR4163391}, see also \cite{MR2719410}. For further background on orbifolds in general we refer the reader to the original  \cite{thurston1979geometry}.

For a Riemannian manifold $M$, the symmetric group $S_n$ acts isometrically on
the Riemannian direct product $M^n$ by permutation of the entries. The metric
quotient $M^n/S_n$ is a Riemannian orbifold, and we denote the natural projection
map by $\pi_n:M^n \to M^n/S_n$. Within this context, we first show the following
auxiliary lemma:

\begin{lemma}\label{orbilift}
Let $M$ be a connected complete Riemannian manifold and $n\in\mathbb N$.
Every metric isometry $\gamma: M^n/S_n \to M^n/S_n$ lifts to an
isometry $\overline{\gamma}:M^n\to M^n$ such that
$\pi_n\circ \overline{\gamma}=\gamma\circ \pi_n.$
\end{lemma}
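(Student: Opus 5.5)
The plan is to use Lange's results in \cite{MR4163391}: every metric isometry between Riemannian orbifolds is an isometry of orbifolds, and lifts to universal (orbifold) coverings. I would first reduce to the simply connected case. Let $p:\tilde M\to M$ be the Riemannian universal cover with deck group $\Gamma\cong\pi_1(M)$. Then $\tilde M^n$ is simply connected, and $M^n/S_n=\tilde M^n/G$ with $G:=\Gamma^n\rtimes S_n$ acting properly discontinuously and isometrically. Since $\tilde M^n$ is a simply connected manifold, it is the orbifold universal cover of $M^n/S_n$, and $G$ is the orbifold fundamental group.

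\textbf{Step 1 (metric isometry $\Rightarrow$ orbifold isometry).} By Lange's metric characterization, the quotient metric determines the orbifold structure of a Riemannian orbifold. Concretely, the regular part of $M^n/S_n$ is the image of the configurations with pairwise distinct entries, which is a Riemannian manifold. The singular strata are recognized metrically, for instance as the points whose tangent cones are not Euclidean. Hence $\gamma$ maps regular points to regular points and is a Riemannian isometry there. Lange's theorem then upgrades $\gamma$ to a smooth orbifold isometry.

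\textbf{Step 2 (lifting).} An orbifold isometry lifts to the orbifold universal covers. So there is an isometry $\hat\gamma:\tilde M^n\to\tilde M^n$ normalizing $G$ and covering $\gamma$. To construct it directly, fix a regular point $\tilde x$ and a local lift of $\gamma$ near it. Extend this lift by analytic continuation along paths in the regular part $\tilde M^n_{\mathrm{reg}}$. That set is the complement of a union of codimension $\ge\dim M$ submanifolds (the lifted diagonals and their $\Gamma$-translates).

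\textbf{Step 3 (the main obstacle: $\dim M=1$).} When $\dim M\ge 2$, this complement is connected and the continuation extends across codimension-$\ge 2$ strata, so the construction goes through. When $\dim M=1$, the diagonals have codimension $1$, they are reflection walls, and the quotient has boundary. Here I would exploit that $M^n/S_n$ is a flat orbifold obtained as the quotient by a reflection group (or its extension by $\Gamma$). Every isometry of a Coxeter chamber extends to the whole space by reflecting across the walls; in effect this is Lange's lifting with reflector strata. This is the step I expect to need the most care.

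\textbf{Step 4 (descent).} It remains to descend $\hat\gamma$ to $M^n$. The map $\hat\gamma$ normalizes $G$, but we need it to normalize $\Gamma^n$. The subgroup $\Gamma^n$ is the subgroup of $G$ of elements acting freely together with the identity. Equivalently, it is the kernel of the map to $S_n$, which is recognizable because the elements of $G$ with fixed points are precisely those with nontrivial permutation part. I expect this to need a short argument. Given it, $\hat\gamma$ descends to $\overline\gamma:M^n\to M^n$ with $\pi_n\circ\overline\gamma=\gamma\circ\pi_n$.
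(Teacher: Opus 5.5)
\textbf{There is a genuine gap in Step 4.} Your Steps 1--3 reach the same intermediate object as the paper: an isometry $\hat\gamma$ of $\tilde M^n$ covering $\gamma$ and normalizing $G=\Gamma^n\rtimes S_n$. The paper gets it in one stroke. A metric isometry of a complete Riemannian orbifold is a submetry with discrete fibres, hence an orbifold covering by Lange's Theorem 1.2, and the universal property then gives the lift. This covers reflector strata too, so your separate $\dim M=1$ discussion is not needed.

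Your recognition of $\Gamma^n$ inside $G$ is false as soon as $\Gamma\neq 1$ and $n\ge 2$. An element $(\alpha_1,\dots,\alpha_n;\sigma)$ has a fixed point if and only if, for every cycle $(a_1\,a_2\,\cdots\,a_\ell)$ of $\sigma$, the cycle product $\alpha_{a_1}\alpha_{a_\ell}\cdots\alpha_{a_2}$ is trivial. So block-permuting elements can act freely. For example, take $\beta\in\Gamma\setminus\{e\}$ and $d=(\beta,e;(1\,2))$, which acts by $(x_1,x_2)\mapsto(\beta x_2,x_1)$. A fixed point would need $x_1=\beta x_1$, so $d$ is free but not in $\Gamma^2$. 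The free elements together with the identity do not even form a subgroup: writing $\delta_1(\beta^{-1})$ for the element with $\beta^{-1}$ in block $1$ and $e$ elsewhere, the product $\delta_1(\beta^{-1})\circ d$ is the transposition $(x_1,x_2)\mapsto(x_2,x_1)$, which fixes the diagonal.

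Falling back on ``the kernel of $G\to S_n$'' does not help either. Since $\hat\gamma\notin G$, the automorphism $g\mapsto\hat\gamma g\hat\gamma^{-1}$ of $G$ has no a priori reason to preserve that kernel. Fixed-point-type invariants cannot distinguish $\delta_1(\beta)$ from $d$. This is exactly the case that matters: tori, $S^1$, $\mathbb{RP}^m$. When $\Gamma=1$ the descent is trivial.

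What is missing is a quantity that is invariant under conjugation by \emph{arbitrary} isometries of $\tilde M^n$ and strictly separates two kinds of elements. On one side are the single-block generators $\delta_a(\beta)$ of $\Gamma^n$, meaning $\beta$ in block $a$ and $e$ elsewhere. On the other are all block-permuting elements of $G$. The paper builds such a $\nu$ from the de~Rham decomposition $\tilde M=\mathbb R^\ell\times N_1\times\cdots\times N_q$.
\begin{itemize}
\item If $q\ge1$, let $\nu(g)$ be the number of irreducible non-flat factors $N_i^{(a)}$ of $\tilde M^n$ on which $g$ is the identity. This is conjugation-invariant because isometries permute these factors. Then $\nu(\delta_a(\beta))\ge(n-1)q$, while a block-permuting element fixes factors only in blocks fixed by $\sigma\neq e$, so it has $\nu\le(n-2)q$.
\item If $\tilde M=\mathbb R^m$, let $\nu(g)=\dim\ker(L_g-\mathrm{Id})$, where $L_g$ is the linear part of $g$. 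Then $\nu(\delta_a(\beta))\ge(n-1)m+1$, because freeness of $\Gamma$ forces the linear part of $\beta$ to have eigenvalue $1$. For a block-permuting element, $\nu\le m\,c(\sigma)\le(n-1)m$, where $c(\sigma)$ is the number of cycles of $\sigma$.
\end{itemize}
Hence each $\hat\gamma\,\delta_a(\beta)\,\hat\gamma^{-1}\in G$ is not block-permuting and so lies in $\Gamma^n$. Since the $\delta_a(\beta)$ generate $\Gamma^n$, $\hat\gamma$ normalizes $\Gamma^n$, and applying the same to $\hat\gamma^{-1}$ lets $\hat\gamma$ descend to $M^n$. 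Without an argument of this kind, your Step 4 does not go through.
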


\begin{proof}
If $\dim M=0$, then $M$ is a point and the claim is immediate. Hence we may assume
that $m:=\dim M>0$. Let
$p:\widetilde M\longrightarrow M$
be the universal Riemannian covering and let $\Gamma=\operatorname{Deck}(p)$.

As a composition of coverings, the map
$P:=\pi_n\circ p^n:\widetilde M^n\longrightarrow M^n/S_n$ is an orbifold covering. Since
$\widetilde M^n$ is simply connected, it is the universal orbifold covering (cf. Thurston's \cite[Theorem 2.9]{MR4163391}); and
$
        \operatorname{Deck}(P)=\Gamma^n\rtimes S_n .
$

Thus we have the general picture
\[
        \widetilde M^n/\Gamma^n\cong M^n,\qquad
        \widetilde M^n/(\Gamma^n\rtimes S_n)\cong M^n/S_n .
\]
In what follows, for the action of $\Gamma^n\rtimes S_n$ on $\widetilde M^n$, we use the convention
\[
(\alpha_1,\ldots,\alpha_n;\sigma)\cdot(x_1,\ldots,x_n)
=
\bigl(\alpha_1x_{\sigma^{-1}(1)},\ldots,
      \alpha_nx_{\sigma^{-1}(n)}\bigr).
\]

Since $M$ is complete, the orbifold $M^n/S_n$ is complete. A metric isometry of a
complete Riemannian orbifold is a submetry with discrete fibres, hence it is an orbifold
covering in the sense of Thurston by \cite[Theorem~1.2]{MR4163391}. Therefore the map
$\gamma\circ P:\widetilde M^n\to M^n/S_n$ is again a universal orbifold covering.

By
the universal property of universal orbifold coverings there exists an isometry
$
        \widetilde\gamma:\widetilde M^n\to \widetilde M^n
$
such that
$
        P\circ\widetilde\gamma=\gamma\circ P.
$
As a consequence of this, $\widetilde\gamma$ normalizes $\Gamma^n\rtimes S_n$. Indeed, for
$d\in \Gamma^n\rtimes S_n$, we have
$
P\circ \widetilde\gamma d\widetilde\gamma^{-1}
=
\gamma\circ P\circ d\circ\widetilde\gamma^{-1}
=
\gamma\circ P\circ\widetilde\gamma^{-1}
=
P,
$
and hence
$
        \widetilde\gamma d\widetilde\gamma^{-1}
        \in \operatorname{Deck}(P)=\Gamma^n\rtimes S_n .
$
Applying the same argument to $\widetilde\gamma^{-1}$ gives
$
        \widetilde\gamma(\Gamma^n\rtimes S_n)\widetilde\gamma^{-1}
        =
        \Gamma^n\rtimes S_n.
$

To complete the proof it remains to show that $\widetilde\gamma$ also normalizes
$\Gamma^n$. We may assume $n\geq2$, the case $n=1$ being trivial.

The group $\Gamma^n$ is generated by the \emph{single-block} elements
$\delta_a(\beta):=(e,\ldots,e,\beta,e,\ldots,e;\,e)$, with $\beta\in\Gamma$ in
the $a$-th component ($1\leq a\leq n$); each acts as the identity on every block
$b\neq a$. Call an element of $\Gamma^n\rtimes S_n$ \emph{block-permuting} if its
$S_n$-component is non-trivial. The idea is to consider a quantity $\nu$ on
$\operatorname{Isom}(\widetilde M^n)$, invariant under conjugation by isometries,
with
\begin{equation}\label{eq:nu-strict}
        \nu\bigl(\delta_a(\beta)\bigr)>\nu(d),\,
        \text{for every block-permuting }d\in\Gamma^n\rtimes S_n \text{ and } \beta\in \Gamma.
\end{equation}
Granting \eqref{eq:nu-strict}, the conjugate
$\widetilde\gamma\,\delta_a(\beta)\,\widetilde\gamma^{-1}\in\Gamma^n\rtimes S_n$
has the same $\nu$-value as $\delta_a(\beta)$, so it is not block-permuting and
hence lies in $\Gamma^n$. As the $\delta_a(\beta)$ generate $\Gamma^n$, this
gives $\widetilde\gamma\Gamma^n\widetilde\gamma^{-1}\subseteq\Gamma^n$, and
applying the same argument to $\widetilde\gamma^{-1}$ yields equality.

By the de~Rham decomposition theorem \cite{MR1473665}, write
$\widetilde M=\mathbb R^{\ell}\times N_1\times\cdots\times N_q$, with
$\mathbb R^{\ell}$ the Euclidean factor and each $N_i$ irreducible and non-flat;
correspondingly
\[
        \widetilde M^n=\mathbb R^{n\ell}\times\prod_{a=1}^n\prod_{i=1}^q N_i^{(a)},
\]
where $N_i^{(a)}$ denotes the copy of $N_i$ in the $a$-th block. Every isometry
of the simply connected $\widetilde M^n$ preserves the Euclidean factor and
permutes the irreducible factors $\{N_i^{(a)}\}$.

\emph{Case 1: $\widetilde M$ is not flat ($q\geq1$).} For
$g\in\operatorname{Isom}(\widetilde M^n)$ let $\nu(g)$ be the number of factors
$N_i^{(a)}$ on which $g$ acts as the identity. This is conjugation-invariant: any
isometry $h$ merely permutes the factors $\{N_i^{(a)}\}$, and $hgh^{-1}$ acts as
the identity on $h(N_i^{(a)})$ if and only if $g$ acts as the identity on
$N_i^{(a)}$; hence the two counts coincide. A single-block element
$\delta_a(\beta)$ is the identity on every block $b\neq a$, hence on the
$(n-1)q$ factors which lie in those blocks, so $\nu(\delta_a(\beta))\geq(n-1)q$. A
block-permuting $d=(\eta_1,\ldots,\eta_n;\sigma)$ carries the factors of block
$b$ into block $\sigma(b)$, so it fixes a factor only inside a block with
$\sigma(b)=b$; as $\sigma\neq e$ fixes at most $n-2$ indices,
$\nu(d)\leq(n-2)q<(n-1)q$, which is \eqref{eq:nu-strict}.

\emph{Case 2: $\widetilde M$ is flat ($\widetilde M=\mathbb R^m$).} Now every
isometry $g$ of $\widetilde M^n=\mathbb R^{nm}$ is affine; let
$\nu(g):=\dim\ker(L_g-\operatorname{Id})$ be the multiplicity of the eigenvalue
$1$ of its linear part $L_g\in O(nm)$. This is conjugation-invariant because
$L_{hgh^{-1}}=L_h\,L_g\,L_h^{-1}$ is conjugate to $L_g$ and hence has the same
eigenvalue-$1$ multiplicity. For $\delta_a(\beta)$ with $\beta=(A,t)$, the linear
part is $A$ on block $a$ and the identity on the other $n-1$ blocks, so
\[
        \nu(\delta_a(\beta))=\dim\ker(A-\operatorname{Id})+(n-1)m\geq(n-1)m+1;
\]
here $\dim\ker(A-\operatorname{Id})\geq1$, since otherwise $\operatorname{Id}-A$
would be invertible and $\beta$ would fix the point $(\operatorname{Id}-A)^{-1}t$,
contradicting the freeness of $\Gamma$.

For the upper bound, let $d=(\alpha_1,\ldots,\alpha_n;\sigma)$ be block-permuting
and write $\alpha_a=(D_a,t_a)$ with $D_a\in O(m)$. Its linear part acts on
$\mathbb R^{nm}=(\mathbb R^m)^n$ by
$L_d(x_1,\ldots,x_n)
        =\bigl(D_1x_{\sigma^{-1}(1)},\ldots,D_nx_{\sigma^{-1}(n)}\bigr)$. 
Thus $x=(x_1,\ldots,x_n)$ is fixed if and only if $x_a=D_ax_{\sigma^{-1}(a)}$ for
every $a$. These equations couple the coordinates within each cycle of $\sigma$:
along a cycle $(a_1\,a_2\,\cdots\,a_\ell)$ (i.e.\ with $\sigma(a_t)=a_{t+1}$), they give
$x_{a_1}=D_{a_1}x_{a_{\ell}}= \dots = D_{a_1} D_{a_\ell}\cdots D_{a_{t+1}}\,x_{a_t}$ and, on closing the loop,
$
        x_{a_1}=\bigl(D_{a_1}D_{a_\ell}\cdots D_{a_2}\bigr)\,x_{a_1}.
$
Thus $x_{a_1}$ lies in the eigenvalue-$1$ subspace of the cycle product
$D_{a_1}D_{a_\ell}\cdots D_{a_2}\in O(m)$. Thus has dimension at most $m$, and the choice of $x_{a_1}$ determines $x_{a_2},\ldots,x_{a_\ell}$. Each of the $c(\sigma)$ cycles of
$\sigma$ therefore contributes at most $m$ to the fixed subspace, so
\[
        \nu(d)=\dim\ker(L_d-\operatorname{Id})\ \leq\ m\,c(\sigma)\ \leq\ (n-1)m,
\]
the last inequality because a permutation $\sigma\neq e$ has at most $n-1$ cycles. Hence
$\nu(\delta_a(\beta))\geq(n-1)m+1>(n-1)m\geq\nu(d)$, which is
\eqref{eq:nu-strict}.

In either case \eqref{eq:nu-strict} is established, and therefore
$\widetilde\gamma\Gamma^n\widetilde\gamma^{-1}=\Gamma^n$.

It follows that $\widetilde\gamma$ descends to an isometry $\overline{\gamma}:\widetilde M^n/\Gamma^n\to \widetilde M^n/\Gamma^n$, defined by $\overline{\gamma}([x]_{\Gamma^n})=[\widetilde\gamma x]_{\Gamma^n}$. It is readily verified that $
        \pi_n\circ\overline{\gamma}=\gamma\circ\pi_n$
as claimed.
\end{proof}

\begin{proof}[Proof of Theorem~\ref{main}]
\emph{Non-rigidity.} If $M$ splits a non-trivial Euclidean factor, then
$(\mathscr P_2(M),W_2)$ admits isometries not induced by isometries of $M$
\cite[Theorem~A]{MR5039553}.

\emph{Rigidity.} Suppose conversely that $M$ has no non-trivial Euclidean factor. If $M$ is a
point there is nothing to prove, so assume $M$ is non-trivial. Then
$M\not\cong\mathbb E$, so by Theorem~\ref{balanced} every Wasserstein isometry
$\Phi:\mathscr P_2(M)\to\mathscr P_2(M)$ satisfies $\Phi(\Delta_n)=\Delta_n$ for
all $n\in\mathbb N$.

Equip $M^n$ with the rescaled product distance
$d_n^2((x_i),(y_i))=\frac1n\sum_{i=1}^n d^2(x_i,y_i)$, and $M^n/S_n$ with the
induced quotient distance. Let $\pi_n:M^n\to M^n/S_n$ be the projection. For
balanced empirical measures the optimal transport problem is a linear
programming problem over the Birkhoff polytope, so its optimum is attained at a
permutation matrix \cite[Chapter 2]{COTFNT}; hence
\[
W_2^2\Big(\tfrac1n\textstyle\sum_{i=1}^n\delta_{x_i},
\tfrac1n\sum_{i=1}^n\delta_{y_i}\Big)
=\min_{\sigma\in S_n}\tfrac1n\sum_{i=1}^n d^2(x_i,y_{\sigma(i)}).
\]
Thus $J_n:M^n/S_n\to\Delta_n$,
$J_n([(x_1,\ldots,x_n)])=\frac1n\sum_{i=1}^n\delta_{x_i}$, is an isometry, and
$\Phi$ induces the isometry $\phi_n:=J_n^{-1}\circ\Phi|_{\Delta_n}\circ J_n$ of
$M^n/S_n$. For $n=1$ this is an isometry $f:=\phi_1$ of $M$, characterized by
$\Phi(\delta_x)=\delta_{f(x)}$. We claim that
\[
\phi_n([(x_1,\ldots,x_n)])=[(f(x_1),\ldots,f(x_n))]\qquad\text{for every }n\geq3.
\]

Fix $n\geq3$. The constant rescaling does not change the isometry group, so
Lemma~\ref{orbilift} lifts $\phi_n$ to an isometry $G_n:M^n\to M^n$ with
$\pi_n\circ G_n=\phi_n\circ \pi_n$. Since $J_n(\pi_n(x,\ldots,x))=\delta_x$, we
have $\phi_n(\pi_n(x,\ldots,x))=\pi_n(f(x),\ldots,f(x))$. The diagonal point
$(f(x),\ldots,f(x))$ is the only element of its $\pi_n$-fibre, so
$G_n(x,\ldots,x)=(f(x),\ldots,f(x))$. Consequently
$H_n:=(f^{-1},\ldots,f^{-1})\circ G_n$
is an isometry of $M^n$ that descends to $M^n/S_n$ and fixes the total diagonal
pointwise: $H_n(x,\ldots,x)=(x,\ldots,x)$.

We now use uniqueness of the metric product decomposition. Since $M$ has no
Euclidean factor, de~Rham's theorem in the non-simply-connected form of
Eschenburg--Heintze \cite{MR1473665} gives
\[
M=M_1\times\cdots\times M_p,
\]
with each $M_r$ non-trivial, indecomposable, and not isometric to $\mathbb E$;
this decomposition is unique up to the order of the factors, the corresponding
factor foliations being canonically determined (see \cite{MR2399098} for more details and a
generalization to finite-dimensional geodesic metric spaces). Because these
foliations are canonical, any isometry of a finite product of such factors
permutes them, interchanging only mutually isometric factors and acting on each
by an isometry (cf. \cite{MR2399098}).

As $M^n=\prod_{a=1}^n\prod_{r=1}^p M_r$ again has no Euclidean factor, $H_n$
permutes its $np$ indecomposable factors in this way. Write $z=(z_1,\ldots,z_n)$
with $z_a=(z_{a,1},\ldots,z_{a,p})$. For each output coordinate $(a,r)$ there are
an input coordinate $(b,s)$ with $M_s\cong M_r$ and an isometry
$\eta_{a,r}:M_s\to M_r$ such that $(H_n(z))_{a,r}=\eta_{a,r}(z_{b,s})$. Evaluate on
the diagonal $z_1=\cdots=z_n=x$: since $H_n(x,\ldots,x)=(x,\ldots,x)$, we get
$x_r=\eta_{a,r}(x_s)$ for all $x\in M$. As distinct factors vary independently and
each $M_r$ is non-trivial, this forces $s=r$ and
$\eta_{a,r}=\operatorname{id}_{M_r}$. Thus there are permutations $\beta_r\in S_n$
with
\[
(H_n(z))_{a,r}=z_{\beta_r(a),r};
\]
a priori, $H_n$ may permute the $n$ copies of each factor $M_r$ independently.

It remains to show that the $\beta_r$ are all equal; we may assume $p\geq2$, so
$\dim M\geq2$ (for $p=1$ there is nothing to prove). Since $H_n$ descends to
$M^n/S_n$, each conjugate $H_n\pi H_n^{-1}$ ($\pi\in S_n$) descends to the
identity, hence fixes every $S_n$-orbit; and an orbit-fixing isometry lies in
$S_n$, because it restricts on the configuration space $U$ of $n$ distinct
points of $M$ to a deck transformation of the free covering $U\to U/S_n$, and
$U$ is connected (as $\dim M\geq2$), so this is a single $\sigma\in S_n$ and the
isometry equals $\sigma$ on $\overline U=M^n$. Thus $H_n\pi H_n^{-1}\in S_n$.

From $(H_n^{-1}w)_{a,r}=w_{\beta_r^{-1}(a),r}$ we get
$(H_n\pi H_n^{-1}w)_{a,r}=w_{(\beta_r^{-1}\pi^{-1}\beta_r)(a),\,r}$; as this is a
single block permutation, $\beta_r^{-1}\pi\beta_r$ is independent of $r$, so
$\beta_s\beta_r^{-1}\in Z(S_n)=\{e\}$ for $n\geq3$.

Therefore $\beta_1=\cdots=\beta_p=:\beta$, and
$H_n(z_1,\ldots,z_n)=(z_{\beta(1)},\ldots,z_{\beta(n)})$ merely permutes the $n$
blocks, which is invisible in $M^n/S_n$. This proves the claim:
\[
\Phi\Big(\tfrac1n\textstyle\sum_{i=1}^n\delta_{x_i}\Big)
=\tfrac1n\sum_{i=1}^n\delta_{f(x_i)}\qquad(n\geq3).
\]
Finally, let $\mu=\sum_{j=1}^k\frac{m_j}{N}\delta_{x_j}$ be a finitely supported
rational measure ($m_j\in\mathbb N$, $\sum_j m_j=N$). Choosing $N\geq3$, we may
view $\mu$ as an element of $\Delta_N$ by repeating each $x_j$ exactly $m_j$
times; hence $\Phi(\mu)=\sum_{j=1}^k\frac{m_j}{N}\delta_{f(x_j)}=f_\#\mu$. These
measures are dense in $(\mathscr P_2(M),W_2)$, and $\Phi$ and $f_\#$ are
isometries, so $\Phi=f_\#$. Thus every isometry of $\mathscr P_2(M)$ is induced
by an isometry of $M$. Together with the non-rigidity case, this proves the
theorem.
\end{proof}

\begin{remark}
The hypothesis $n\geq3$ enters the proof only through $Z(S_n)=\{e\}$, but it is
genuinely needed there: for $n=2$ one has $Z(S_2)=S_2$, and the step forcing
$H_n$ to be a block permutation fails. Indeed, taking $M=M_1\times M_2$, the
isometry
\[
   T\bigl((x_1,x_2),(y_1,y_2)\bigr)=\bigl((x_1,y_2),(y_1,x_2)\bigr)
\]
of $M^2$ descends to an orbifold isometry of $M^2/S_2$ that fixes the diagonal,
yet does not arise from a permutation of the two blocks.
\end{remark}

\section{Shape preservation over a split Euclidean factor}

In this section we prove Theorem~\ref{shape}. Recall that an isometry $\Phi$ of
$\Prob_2(M)$ is \emph{shape-preserving} if for every $\mu$ there is
$\gamma_\mu\in\operatorname{Isom}(M)$ with $\Phi(\mu)=(\gamma_\mu)_\#\mu$, and
\emph{exotic} otherwise.

Throughout, $M\not\cong\E$. By the de~Rham decomposition write
$M=N\times\E^k$, where $\E^k$ is the maximal Euclidean factor and $N$ is free of
Euclidean factors. If $k=0$ then $M$ has no Euclidean factor and
Theorem~\ref{shape} is exactly the rigid case of Theorem~\ref{main}. We thus assume $k\ge1$; since $M\not\cong\E$ this implies
$\dim M\ge2$.

For $\mu\in\Prob_2(M)$ let $\mu_{\E^k}:=(\pi_{\E^k})_\#\mu$ denote the marginal of $\mu$ on the
Euclidean factor and
\[
   \bary(\mu):=\int_{\E^k}x\,d\mu_{\E^k}(x)\in\E^k
\]
its \emph{barycentre}, which is well defined and unique because $\E^k$ is Hilbert and
$\mu$ has finite second moment. For $R\in O(k)$ let
$r_{R,b}:M\to M$, $r_{R,b}(x_N,x_E)=(x_N,\,b+R(x_E-b))$, be the rotation by $R$
about $b\in\E^k$ in the Euclidean factor, and define the \emph{barycentre
rotation}
\[
   \Psi_R(\mu):=\big(r_{R,\bary(\mu)}\big)_\#\mu .
\]
Each $\Psi_R$ is an isometry of $\Prob_2(M)$ that fixes every Dirac mass, and
$R\mapsto\Psi_R$ is an injective homomorphism $O(k)\to\operatorname{Isom}(\Prob_2(M))$;
this is the content of the non-rigidity construction of \cite[Theorem~A]{MR5039553}
(see also \cite{MR2731158} for the original case of $N$ trivial). The maps $\Psi_R$ with
$R\neq\mathrm{id}$ are shape-preserving but not push-forwards. Our task is to show
that these, together with $\operatorname{Isom}(M)$, exhaust
$\operatorname{Isom}(\Prob_2(M))$.

We record two facts about isometries of $M^n=N^n\times\E^{kn}$. The first is a
direct consequence of the de~Rham decomposition which we already used in proof of Lemma~\ref{orbilift}: since $N$, and hence $N^n$, has
no Euclidean de~Rham factor, $\E^{kn}$ is the maximal Euclidean de~Rham factor of
$M^n$; by its uniqueness this factor is canonical and preserved by every isometry. Consequently every isometry of $M^n$ splits
factorwise,
\[
   \operatorname{Isom}(M^n)=\operatorname{Isom}(N^n)\times\operatorname{Isom}(\E^{kn})
   =\operatorname{Isom}(N^n)\times\big(O(kn)\ltimes\R^{kn}\big).
\]

The second fact concerns the
Euclidean part $\E^{kn}$, to which we now turn.

Embed $S_n\hookrightarrow O(kn)$ as the group $P=\{P_\sigma:\sigma\in S_n\}$ of
\emph{block permutations}
$P_\sigma(v_1,\dots,v_n)=(v_{\sigma^{-1}(1)},\dots,v_{\sigma^{-1}(n)})$,
$v_i\in\E^k$, and split orthogonally: 

$$\E^{kn}=U\oplus V,$$
\begin{align*}
   U:=\{(v,\dots,v):v\in\E^k\}\cong\E^k, \quad
   V:=\Big\{(v_i):\textstyle\sum_i v_i=0\Big\}\cong\E^{k(n-1)}.
\end{align*}

For $B\in O(k)$ let $\Theta_B$ be the linear isometry of $\E^{kn}$ fixing $U$
pointwise and acting on $V$ as $B$ in each coordinate (under
$V\cong\E^k\otimes\rho_{\mathrm{std}}$, $\Theta_B=B\otimes 1$). Note
$\Theta_B$ is the linear part of the barycentre rotation restricted to balanced
configurations.

\begin{lemma}\label{lem:normalizer}
Let $g\in\operatorname{Isom}(\E^{kn})=O(kn)\ltimes\R^{kn}$
normalize the block-permutation subgroup $P$ and fix $U$ pointwise. Then there
exist $B\in O(k)$ and $\tau\in S_n$ with $g=P_\tau\circ\Theta_B$.
\end{lemma}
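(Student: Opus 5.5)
The plan is to reduce to linear algebra and then use the representation theory of $S_n$. \emph{First}, $g$ is linear. Write $g(x)=Lx+t$ with $L\in O(kn)$. Since $g$ fixes $U$ pointwise and $0\in U$, we get $t=0$. So $g=L$ is linear and $L|_U=\Id$. As $L$ is orthogonal, it preserves $V=U^\perp$. The case $n=1$ is trivial: then $V=0$ and we take $B=\Id$, $\tau=e$. So assume $n\ge2$.

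\emph{Second}, correct $L$ by a block permutation so that it commutes with $P$. Conjugation by $L$ preserves $P\cong S_n$, so it defines an automorphism $\alpha$ of $S_n$ with
\[
L P_\sigma L^{-1}=P_{\alpha(\sigma)}.
\]
For $n\neq6$ every automorphism of $S_n$ is inner, so $\alpha(\sigma)=\tau\sigma\tau^{-1}$ for some $\tau\in S_n$.

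For $n=6$ an outer automorphism has to be ruled out, and I would use traces. Conjugation preserves the trace, and
\[
\operatorname{tr}P_\sigma=k\cdot\#\operatorname{Fix}(\sigma).
\]
Hence $\alpha$ preserves the number of fixed points. The outer automorphism of $S_6$ sends a transposition (4 fixed points) to a product of three disjoint transpositions (0 fixed points), so $\alpha$ cannot be outer. Thus $\alpha$ is inner in every case.

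Now set $L':=P_\tau^{-1}L$. This map commutes with every $P_\sigma$. It still fixes $U$ pointwise, because $P_\tau$ fixes $U$ pointwise.

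\emph{Third}, identify $L'|_V$ with Schur's lemma. As an $S_n$-representation, $V\cong\E^k\otimes\rho_{\mathrm{std}}$, where $S_n$ acts trivially on $\E^k$. The standard representation $\rho_{\mathrm{std}}$ of dimension $n-1$ is absolutely irreducible, i.e.\ of real type: its complexification is the irreducible representation indexed by the partition $(n-1,1)$. Therefore
\[
\operatorname{End}_{S_n}(\rho_{\mathrm{std}})=\R,
\]
and the commutant of $1\otimes\rho_{\mathrm{std}}(S_n)$ in $\operatorname{End}(\E^k\otimes\rho_{\mathrm{std}})$ is $\operatorname{End}(\E^k)\otimes1$. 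To see this concretely, decompose an $S_n$-equivariant map block by block into $\operatorname{Hom}_{S_n}(\rho_{\mathrm{std}},\rho_{\mathrm{std}})=\R$. Hence
\[
L'|_V=B\otimes1
\]
for some $B\in\operatorname{End}(\E^k)$.

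Since $L'$ is orthogonal and $\rho_{\mathrm{std}}\ne0$, we have $B\otimes1$ orthogonal, which forces $B\in O(k)$. Together with $L'|_U=\Id$ this gives $L'=\Theta_B$, and so $g=P_\tau\Theta_B$.

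I expect the main obstacle to be the automorphism step, namely the exceptional case $n=6$. The trace (fixed-point) argument above should settle it cleanly. The Schur step requires only the standard fact that $\rho_{\mathrm{std}}$ is absolutely irreducible for $n\ge2$.
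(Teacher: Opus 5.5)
Your proposal is correct and follows the paper's proof essentially step for step: the translation vanishes because $0\in U$, the automorphism of $S_n$ induced by conjugation is inner (with the outer automorphism of $S_6$ excluded by a fixed-point/character count), and Schur's lemma on $V\cong\E^k\otimes\rho_{\mathrm{std}}$ gives $P_\tau^{-1}L=\Theta_B$. The only cosmetic difference is that you compare traces of $P_\sigma$ on all of $\E^{kn}$, namely $k\cdot\#\mathrm{Fix}(\sigma)$, whereas the paper uses the character $\#\mathrm{Fix}(\sigma)-1$ of $\rho_{\mathrm{std}}$ on $V$; these detect the same invariant.
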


\begin{proof}
We may assume $n\ge2$. Write $g(x)=Lx+t$ with $L\in O(kn)$. Since
$0\in U$ and $g$ fixes $U$ pointwise, $t=g(0)=0$, so $g=L$ is a linear isometry
that fixes $U$ pointwise and normalizes $P$. In particular $L(U)=U$, and since
$V=U^{\perp}$ also $L(V)=V$.

Decompose $\E^{kn}=\E^k\otimes\E^n$ with $P$ acting as $1\otimes\rho_{\mathrm{perm}}$.
Over $\E$, $\rho_{\mathrm{perm}}=\rho_{\mathrm{triv}}\oplus\rho_{\mathrm{std}}$ with
$\rho_{\mathrm{std}}$ the $(n-1)$-dimensional standard representation, which is
absolutely irreducible; under this identification $U=\E^k\otimes\rho_{\mathrm{triv}}$
and $V=\E^k\otimes\rho_{\mathrm{std}}$ are the $P$-isotypic components. Since $L$
normalizes $P$, conjugation by $L$ induces
$\alpha\in\operatorname{Aut}(S_n)$ with $LP_\sigma L^{-1}=P_{\alpha(\sigma)}$;
restricted to $V$, where $P_\sigma$ acts as $1\otimes\rho_{\mathrm{std}}(\sigma)$,
this shows $L|_V$ conjugates $\rho_{\mathrm{std}}$ to $\rho_{\mathrm{std}}\circ\alpha$,
whence $\rho_{\mathrm{std}}\cong\rho_{\mathrm{std}}\circ\alpha$. Therefore $\alpha$ is
inner, which was automatic for $n\ne6$ (where $\operatorname{Out}(S_n)=1$), while for $n=6$
the outer automorphism swaps transpositions with triple transpositions
\cite{MR660917}, on which the character $\#\mathrm{Fix}(\cdot)-1$ of
$\rho_{\mathrm{std}}$ takes the values $3$ and $-1$. It therefore does not fix
$\rho_{\mathrm{std}}$ and is excluded.

Thus there exists $\tau \in S_n$ such that
$P_{\alpha(\sigma)}=P_{\tau}P_\sigma P_{\tau}^{-1}$
for all $\sigma$; thus $P_\tau^{-1}L$ centralizes $P$, and restricted to $V$,
$P|_V=1\otimes\rho_{\mathrm{std}}(S_n)$. As $\rho_{\mathrm{std}}$ is absolutely
irreducible, Schur's lemma gives $\operatorname{End}_{S_n}(V)=\operatorname{End}(\E^k)$,
so $(P_\tau^{-1}L)|_V=B\otimes1$ for some $B\in O(k)$. Since $L$ and $P_\tau$ both
fix $U$ pointwise, $(P_\tau^{-1}L)|_U=\mathrm{id}_U$. Hence
$L=P_\tau\circ\big(\mathrm{id}_U\oplus(B\otimes1)\big)=P_\tau\circ\Theta_B$, i.e.\
$g=P_\tau\circ\Theta_B$.
\end{proof}

\begin{proof}[Proof of Theorem~\ref{shape}]
As noted, we may assume $k\ge1$, hence $\dim M\ge2$. Let
$\Phi\in\operatorname{Isom}(\Prob_2(M))$. By Theorem~\ref{balanced}, $\Phi(\Delta_n)=\Delta_n$ for all $n$.
As in the proof of Theorem~\ref{main}, $J_n:M^n/S_n\to\Delta_n$ is an isometry,
$\phi_n:=J_n^{-1}\circ\Phi|_{\Delta_n}\circ J_n\in\operatorname{Isom}(M^n/S_n)$,
and $f:=\phi_1\in\operatorname{Isom}(M)$ satisfies $\Phi(\delta_x)=\delta_{f(x)}$.

Fix $n\ge\max\{3,k+1\}$. By Lemma~\ref{orbilift}, $\phi_n$ lifts to
$G_n\in\operatorname{Isom}(M^n)$, and
$H_n:=(f^{-1},\dots,f^{-1})\circ G_n$ is an isometry of $M^n$ that descends to
$M^n/S_n$ and fixes the total diagonal pointwise (as in the proof of
Theorem~\ref{main}). By the de~Rham splitting above, $H_n=(H_n^N,H_n^E)$ with
$H_n^N\in\operatorname{Isom}(N^n)$ and
$H_n^E\in O(kn)\ltimes\R^{kn}$.

Again exactly as in the proof of Theorem~\ref{main} (using that the
configuration space of $n$ distinct points of $M$ is connected, which holds as
$\dim M\ge2$), we see that $H_n$ normalizes $S_n\leq O(nk)$.
Since $S_n$ acts diagonally on the product $M^n=N^n\times\E^{kn}$, $H_n^E$ and $H_n^N$ also both normalize $S_n$ and fix the diagonal in $\E^{kn}$ and $N^n$ respectively. Therefore, we can apply Lemma~\ref{lem:normalizer} to get,
\[H_n^E=P_{\tau}\circ\Theta_{B_n}, \, \text{ for some }B_n\in O(k),\ \tau\in S_n.\]

Since $N$ has no Euclidean factor, the argument in the
proof of Theorem~\ref{main}, applied to $N^n=\prod_{a,r}N_r$, shows that
$H_n^N$ is a pure block permutation $P_\beta$ of the $n$ copies of $N$.

The barycentre rotation $\Theta_{B_n}$ commutes
with every $P_\sigma$. Hence, comparing the two factors of the descent
condition $H_n\pi H_n^{-1}\in S_n$ gives
$\beta\pi\beta^{-1}=\tau\pi\tau^{-1}$ for all $\pi$, so
$\tau^{-1}\beta\in Z(S_n)=\{e\}$; thus $\beta=\tau=:\sigma$ and
\[
   H_n=P_\sigma\circ\big(\mathrm{id}_{N^n}\times\Theta_{B_n}\big).
\]

A block permutation of the $n$ copies of $M$ is invisible in $M^n/S_n$, so for a
balanced measure $\mu=\tfrac1n\sum_i\delta_{x_i}\in\Delta_n$, unwinding
$G_n=(f,\dots,f)\circ H_n$ yields
\begin{equation}\label{eq:shapen}
   \Phi(\mu)=f_\#\,\Psi_{B_n}(\mu),\qquad \mu\in\Delta_n .
\end{equation}

Finally, we need to show independence of $n$: if $n\mid N$ then $\Delta_n\subset\Delta_N$ (repeat
each atom $N/n$ times), and \eqref{eq:shapen} for $n$ and for $N$ both compute
$\Phi$ on $\mu\in\Delta_n$, so $\Psi_{B_n}(\mu)=\Psi_{B_N}(\mu)$. Choosing $\mu$
whose fluctuations $x_i^E-\bary(\mu)$ span $\E^k$ and have pairwise distinct
norms (and therefore trivial $O(k)$-stabilizer; generic for $n\ge\max\{3,k+1\}$), this
equality places $B_N^{-1}B_n$ in that stabilizer, hence $B_n=B_N$. Thus for all
$m,n\ge\max\{3,k+1\}$, $B_m=B_{mn}=B_n\equiv :B$ is independent of $n$.

\emph{Conclusion:} By \eqref{eq:shapen}, $\Phi=f_\#\circ\Psi_B$ on
$\bigcup_{N\ge\max\{3,k+1\}}\Delta_N$, which contains the finitely supported
rational-weight measures and is therefore dense in $\Prob_2(M)$; and so by density, $\Phi=f_\#\circ\Psi_B$. In particular $\Phi$ is
shape-preserving.

Finally, $R\mapsto\Psi_R$ embeds $O(k)$ into
$\operatorname{Isom}(\Prob_2(M))$ with $\Psi_R$ a push-forward only for
$R=\mathrm{id}$, and for $\gamma\in\operatorname{Isom}(M)$ one has
$\gamma_\#\Psi_R\gamma_\#^{-1}=\Psi_{\rho(\gamma)R\rho(\gamma)^{-1}}$, where
$\rho(\gamma)\in O(k)$ is the rotational part of $\gamma$ on the Euclidean
factor. Hence $(\gamma,R)\mapsto\gamma_\#\circ\Psi_R$ is an isomorphism
$\operatorname{Isom}(M)\ltimes O(k)\xrightarrow{\ \cong\ }
\operatorname{Isom}(\Prob_2(M))$. This proves Theorem~\ref{shape}.
\end{proof}

\begin{remark}
The argument degenerates precisely at $M\cong\E$, and for two independent
reasons: Theorem~\ref{balanced} requires $M\not\cong\E$ (on the line the balanced
combinations $\Delta_n$ are not preserved), and the descent step needs the
configuration space of $n$ distinct points to be connected, which fails in
dimension one. This is consistent with Kloeckner's exotic flow on
$\Prob_2(\E)$, which both unbalances $\Delta_n$ and exploits the disconnectedness
of ordered configurations of points on a line.
\end{remark}
\newpage
\section*{Acknowledgements}
I am grateful to Alexander Lytchak and Nicola Gigli for their comments and suggestions; I am particularly indebted to the former for his encouragement and reading a late version of this article, and to the latter for answering questions on  his work~\cite{MR2847481}. Thanks are also due to Stefan K\"uhnlein for instructive  comments, and Maximilian Wackenhuth for a fruitful discussion.

\bibliographystyle{alpha}
\bibliography{paper}

\end{document}